\newcommand{\lra}{\longrightarrow}
\newcommand\sF{{\mathcal F}}
\newcommand\sH{{\mathcal H}}
\newcommand\sD{{\mathcal D}}
\newcommand\sO{{\mathcal O}}
\newcommand\sC{{\mathcal C}}
\newcommand\sT{{\mathcal T}}
\newcommand\bZ{{\mathbb Z}}
\newcommand\bC{{\mathbb C}}
\newcommand\Hom{{\rm Hom}}
\newcommand\Ext{{\rm Ext}}
\newcounter{lemma}
\theoremstyle{plain} 
\newtheorem{theorem}{\noindent\bf Theorem}[section]
\newtheorem{lemma}[theorem]{\noindent\bf Lemma}
\newtheorem{proposition}[theorem]{\noindent\bf Proposition}
\newtheorem{claim}[theorem]{\noindent\bf Claim}
\newtheorem{definition}[theorem]{\noindent\bf Definition}
\theoremstyle{definition}
\newtheorem{remark}[theorem]{\noindent\bf Remark}
\title[saturation] 
{Bounded derived categories of very simple manifolds}
\author{Keiji Oguiso}
\dedicatory{Dedicated to Professor Dr. Fabrizio Catanese 
on the occasion of his sixtieth birthday}
\subjclass[2000]{14F05}
\begin{document}

\begin{abstract} A non-representable cohomological functor of finite type of 
the bounded derived category of coherent sheaves of a compact complex manifold of dimension greater than one 
with no proper closed subvariety is given explicitly in categorical terms. This is a partial generalization of an impressive result due to Bondal and 
Van den Bergh.
\end{abstract}
\maketitle
\tableofcontents
\section{Introduction - Background and Main Result}
\noindent

Throughout this paper, we shall work over the complex number field 
$\bC$. 

Let $T$ be a $\bC$-linear $\Ext$-finite triangulated category. That is, $T$ is a triangulated category (\cite{BBD83}, Chapter 1 (1.3), 
\cite{KS90}, Chapter I (1.5)) such that 
${\rm Hom}_{T}(a, b)$ for any $a, b \in {\rm Ob}\, (T)$ is a $\bC$-linear 
space satisfying 
$$\sum_{n \in \bZ} \dim_{\bC} \Hom_T(a, b[n]) 
< \infty\, .$$ 
For instance, the bounded derived category $D^b({\rm Coh}\,X)$ of coherent sheaves on a compact connected complex manifold naturally forms a $\bC$-linear $\Ext$-finite 
triangulated category (\cite{GR84} Annex, for basics on the coherent sheaves in the analytic setting). 

By a {\it cohomological functor} on $T$, 
we mean a contravariant functor 
$$H : T \longrightarrow ({\rm Vect-}{\bC})$$
such that for any distinguished triangle 
$$a \longrightarrow b \longrightarrow c \longrightarrow a[1]$$
in $T$, the induced sequence
$$H(a[1]) \longrightarrow H(c) \longrightarrow H(b) \longrightarrow H(a)$$
is an exact sequence of $\bC$-linear spaces. $H$ is {\it of finite type}, 
if in addition,     
$$\sum_{n \in \bZ} \dim_{\bC} H(b[n]) 
< \infty$$
for any $b \in {\rm Ob}\, (T)$. If $T$ is a $\bC$-linear $\Ext$-finite triangulated category, then the functors $\Hom_T(-,a)$ ($a \in {\rm Ob}\, (T))$ 
are cohomological functors of finite 
type. A functor $H$ is called {\it representable} if there is 
$a \in\, {\rm Ob}\,(T)$ such that 
$H(-) \simeq {\rm Hom}_T(-, a)$ as functors. $T$ is called {\it saturated} if all cohomological functors of finite type on $T$ are representable. 

In their paper \cite{BV03}, Corollary 3.1.5, Theorem 5.6.3, Bondal and 
Van den Bergh show the following remarkable 
theorems:

\begin{theorem}\label{bb1}  Let $X$ be a smooth, proper 
algebraic variety. Then, $D^b({\rm Coh}\, X)$ is saturated.
\end{theorem}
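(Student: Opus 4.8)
The plan is to realize $T:=D^b(\mathrm{Coh}\,X)$ as the subcategory of compact objects of a large, compactly generated triangulated category, to produce a representing object there by Brown representability, and then to check that this object lands back in $T$. Concretely, set $\mathcal D:=D(\mathrm{QCoh}\,X)$, the unbounded derived category of quasi-coherent sheaves. By Neeman's work $\mathcal D$ is compactly generated and its subcategory of compact objects is the category $\mathrm{Perf}(X)$ of perfect complexes. Since $X$ is smooth, every bounded complex of coherent sheaves admits a finite locally free resolution, so $\mathrm{Perf}(X)=D^b(\mathrm{Coh}\,X)=T$; since $X$ is proper, $T$ is $\Ext$-finite as in the introduction. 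It therefore suffices to show that a cohomological functor $H$ of finite type on $T$ is represented by a compact object of $\mathcal D$.

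First I would pass from $T$ to the large category. Given $H\colon T^{op}\to(\mathrm{Vect}\text{-}\bC)$, one extends it along $T\hookrightarrow\mathcal D$ to a cohomological functor $\widehat H\colon\mathcal D^{op}\to(\mathrm{Ab})$ that carries coproducts in $\mathcal D$ to products; this extension is forced, since $\mathcal D$ is generated by $T$ under coproducts and the value on a general object is determined by continuity from its presentation as a homotopy colimit of objects of $T$. Neeman's Brown representability theorem, applicable because $\mathcal D$ is compactly generated, then yields an object $E\in\mathcal D$ and a natural isomorphism $\widehat H(-)\cong\Hom_{\mathcal D}(-,E)$. Restricting to $T$ gives $H(c)\cong\Hom_{\mathcal D}(c,E)$ for every $c\in\mathrm{Ob}\,(T)$.

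The remaining and decisive step is to prove that $E$ is compact, i.e. $E\in\mathrm{Perf}(X)=T$; this is the main obstacle. The finite-type hypothesis says precisely that $\sum_{n}\dim_{\bC}\Hom_{\mathcal D}(c,E[n])<\infty$ for every $c\in T$. Testing against a single generator is not enough, as it only bounds the cohomology in a way that depends on $c$; the crucial input is the existence of a \emph{strong} generator $G$ of $T$, an object from which every object of $T$ is obtained by shifts, finite direct sums, direct summands, and a number of cones bounded independently of the object. Such a $G$ is available because $D^b(\mathrm{Coh}\,X)$ is regular for smooth $X$. Propagating the per-object finiteness, together with the vanishing of $\Hom_{\mathcal D}(G,E[n])$ for $|n|\gg 0$, through the uniformly bounded number of triangles that express an arbitrary $c$ in terms of $G$ produces a uniform bound, from which one deduces that $E$ has bounded coherent cohomology and hence, by smoothness, is perfect. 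Then $E\in T$ and $H(-)\cong\Hom_T(-,E)$, so $H$ is representable. The hardest part is exactly this conversion of "finite type" into genuine compactness of $E$, where smoothness (identifying perfect with bounded-coherent) and properness (finiteness and the strong generator) are both essential, and where the mere existence of a Serre functor would not suffice.
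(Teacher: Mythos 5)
The paper does not prove this statement: it is quoted verbatim from Bondal and Van den Bergh \cite{BV03} (Corollary 3.1.5 there), so the only fair comparison is with their argument. Your overall architecture --- embed $T=D^b(\mathrm{Coh}\,X)$ as the compact objects of $\mathcal{D}=D(\mathrm{QCoh}\,X)$, represent the functor by some $E\in\mathcal{D}$, then show $E$ is compact using smoothness and properness --- is a legitimate and well-known alternative route, but it is not theirs: \cite{BV03} stay entirely inside $T$, using the strong generator to build a finite resolution of $H$ by representable functors, concluding that $H$ is a direct summand of a representable, and then invoking the Karoubian (idempotent-complete) property of $D^b(\mathrm{Coh}\,X)$ --- a hypothesis their Theorem 1.3 needs and your sketch never mentions.

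The genuine gap in your version is the extension step. You assert that $H$ extends to a coproduct-to-product cohomological functor $\widehat{H}$ on $\mathcal{D}$ ``by continuity'' from presentations of objects as homotopy colimits of compacts. This is not a construction: homotopy colimits in a triangulated category are neither canonical nor functorial, and a cohomological functor does not turn them into limits --- the value on a homotopy colimit is only pinned down up to a $\varprojlim^1$ ambiguity --- so $\widehat{H}$ is neither forced nor obviously existent. What rescues the strategy is a genuine theorem (Krause's Brown representability via coherent functors, or Neeman's variants): for a compactly generated $\mathcal{S}$, every cohomological functor on $(\mathcal{S}^{c})^{\mathrm{op}}$ is the restriction of $\Hom_{\mathcal{S}}(-,E)$ for some $E\in\mathcal{S}$. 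That must be cited or proved; it cannot be waved through. Likewise your final step --- that finite type of $\Hom(-,E)$ on perfect complexes forces $E\in D^b(\mathrm{Coh}\,X)$ --- is itself a nontrivial lemma (this is where properness really enters, via testing against $\sO_X$ and Koszul resolutions of points to obtain boundedness and coherence of the cohomology sheaves); ``propagating the per-object finiteness through uniformly many triangles'' bounds dimensions but does not by itself yield coherence. With those two inputs made precise the proof closes, but as written both are asserted rather than established.
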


\begin{theorem}\label{bb2}  Let $X$ be a smooth, compact complex surface 
having no complete curve. Then, $D^b({\rm Coh}\, X)$ is not saturated.
\end{theorem}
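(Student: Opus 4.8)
The plan is to unwind the definition of saturation: to prove that $T := D^b({\rm Coh}\, X)$ is not saturated it suffices to exhibit a single cohomological functor of finite type $H$ that is not isomorphic to any representable functor $\Hom_T(-,a)$. Since $X$ is compact, $T$ is genuinely $\Ext$-finite, so every $\Hom_T(-,a)$ is a legitimate cohomological functor of finite type, and these are exactly the competitors that the constructed $H$ must be shown to differ from. Before building $H$, I would record why the ``obvious'' obstruction is unavailable: Serre--Grothendieck duality holds on any compact complex manifold (every object of $T$ is perfect, as the local rings $\bC\{z_1,\dots,z_n\}$ are regular and admit finite free resolutions), giving a Serre functor $S=(-)\otimes\omega_X[2]$ on $T$. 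Hence non-saturation cannot be detected through the nonexistence of a Serre functor, and a bespoke non-representable functor is really needed.

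The geometric input I would exploit is that a compact surface with no complete curve has algebraic dimension $0$: were $a(X)\ge 1$, its algebraic reduction would produce a fibration or meromorphic map whose generic or exceptional fibres are complete curves. Consequently $X$ carries no nonconstant meromorphic function, and its only proper compact analytic subsets are finite sets of points; every coherent sheaf on $X$ is thus either a torsion sheaf supported at finitely many points or a sheaf of full $2$-dimensional support. I would build $H$ out of the skyscraper sheaves $\mathcal O_x$, assembling the family $\{\Hom_T(-,\mathcal O_x)\}_{x\in X}$ together with $\mathcal O_X$ into a functor that, numerically, behaves as though it were represented by the structure sheaf $\mathcal O_C$ of a curve $C\subset X$ --- an object which cannot exist. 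Concretely I would prescribe the values of $H$ on the test objects $\mathcal O_x[i]$ and $\mathcal O_X[i]$, check contravariance and the long-exact-sequence property on distinguished triangles, and verify the finite-type bound $\sum_i\dim_\bC H(b[i])<\infty$ object by object; these verifications are routine once the prescription is fixed.

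The main obstacle is the non-representability itself, and this is precisely where the no-curve hypothesis must do the work. Assuming $H\cong\Hom_T(-,a)$ for some $a\in{\rm Ob}(T)$, I would reconstruct the cohomology sheaves $\mathcal H^i(a)$ from the values of $H$: evaluating on $\mathcal O_x[i]$ recovers the stalks (fibres) of $\mathcal H^i(a)$ at each $x$, while evaluating on $\mathcal O_X[i]$ recovers their global sections. The prescribed values would then force some $\mathcal H^i(a)$ to be a nonzero coherent sheaf whose support is a $1$-dimensional compact analytic subset of $X$, equivalently to produce a section pairing incompatible with $a(X)=0$. Since $X$ has no complete curve this is impossible, so no representing object $a$ exists and $H$ is not representable.

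The delicate steps I expect to be hardest are, first, phrasing the construction purely in categorical terms --- as the intended statement promises --- rather than through the embedding of $T$ into sheaves, and second, the support-extraction argument, in which one converts the categorical data of $H$ into the geometric conclusion that $a$ must ``see'' a curve. Controlling the torsion (finite-support) part of any putative $a$, so that it cannot by itself account for the prescribed values of $H$ across \emph{all} points $x$ simultaneously, is the technical heart of the matter, and it is exactly there that the absence of complete curves, and hence of full $1$-dimensional support, delivers the contradiction.
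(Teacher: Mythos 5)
There is a genuine gap, and it sits exactly where you wave it away as ``routine once the prescription is fixed'': you never actually construct the functor $H$. A cohomological functor is not determined by prescribing its values on the test objects $\sO_x[i]$ and $\sO_X[i]$; you must produce an honest contravariant functor on all of $D^b({\rm Coh}\,X)$ together with the long exact sequences, and the naive ways of ``assembling the family $\{\Hom(-,\sO_x)\}_{x\in X}$'' all fail the finite-type condition. For instance $\bigoplus_{x\in X}\Hom(-,\sO_x)$ and $\prod_{x\in X}\Hom(-,\sO_x)$ both give infinite-dimensional values on $b=\sO_X$, since $\Hom(\sO_X,\sO_x)=\bC$ for every point $x$. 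Making a functor that is one-dimensional on each skyscraper (so that no honest coherent object can represent it) and yet finite-dimensional on every object is the entire technical content of the theorem, and it is what the paper's machinery exists to deliver: one tilts the standard $t$-structure by the torsion pair $(\sT,\sF)$ (torsion sheaves supported at points, torsion-free sheaves) to obtain a new heart $^p\sC$ in which the objects $\bC_x[-1]$ are \emph{simple}; one proves $^p\sC$ has finite length (Theorem~\ref{artinnoether} --- this is where ``no complete curve'' enters, via ranks of torsion-free sheaves and lengths of point-supported torsion sheaves); one then passes to Gabriel's locally noetherian completion $\hat{^p\sC}$, takes $e=\bigoplus_\lambda \hat c_\lambda$ the sum of injective hulls of all simples, and sets $E(-)=\Hom(\tilde\iota(-),e)$. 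Finite type then follows because $\dim\Hom(\iota(c),e)$ equals the number of simple factors of $c$, which is finite by finite length. None of this structure is visible in your proposal, and without it you have no functor to test for representability.

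By contrast, the second half of your plan --- the non-representability argument --- is essentially the right idea and close to what the paper does: a putative representing object $p$ can be taken in the heart, sits in an exact sequence $0\to f\to p\to t[-1]\to 0$ with $f$ torsion-free and $t$ supported at finitely many points, and for $x$ outside that finite bad locus one computes $\Hom(\bC_x[-1],f)\simeq\Ext^1_{\sC}(\bC_x,f)=0$ (using $\dim X\ge 2$ and local freeness of $f$ at $x$) and $\Hom(\bC_x[-1],t[-1])=0$, so $E(\bC_x[-1])=0$, contradicting $E(\bC_x[-1])=\bC$. Your ``support extraction'' is a reasonable paraphrase of this. But note that the contradiction does not require exhibiting a curve; it only requires that the representing object cannot account for \emph{all} points simultaneously while the functor does. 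I would also caution that your framing of the functor as ``behaving as though represented by $\sO_C$ for a curve $C$'' is a heuristic, not a construction, and that the paper's actual mechanism (simples in a tilted finite-length heart, injective hulls in the ind-completion) is what converts that heuristic into mathematics.
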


Smooth proper algebraic varieties 
and compact complex manifolds share many properties 
in coherent sheaf cohomology at fundamental levels. So, it is rather surprizing at least for me that in the bounded derived categories of coherent sheaves, they make such a sharp contrast already in dimension $2$. 

The aim of this note is to generalize Theorem (\ref{bb2}) a little bit. 

We call a compact connected complex manifold $X$ {\it very simple} if 
$\dim_{\bC}X \ge 2$ and $X$ has no complete irreducible subvariety other than $X$ itself and a point. The notion ``very simple" is much more restrictive than the notion 
``simple" introduced by Fujiki (\cite{Fu83} Page 237, line 7). Any very simple manifold has no global meromorphic function so that it is far from being algebraic. As well-known, generic complex tori of dimension $\ge 2$, K3 surfaces of Picard number $0$, some surfaces belonging to the Kodaira's class $VII$ are very simple manifolds (\cite{Ve04} Proposition (3.1) and \cite{BHPV04}, Page 229, Proposition (19.1)).

Our main result is the following: 

\begin{theorem}\label{main}  Let $X$ be a very simple manifold. 
Then, $D^b({\rm Coh}\,X)$ is not saturated. 
\end{theorem}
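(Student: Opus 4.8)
The plan is to follow the route behind Theorem \ref{bb2}: rather than trying to verify saturation, I would exhibit one explicit cohomological functor of finite type on $T := D^b({\rm Coh}\,X)$ and prove it is not representable. First I would record the features of $T$ forced by very-simplicity. Compactness of $X$ makes $T$ an $\Ext$-finite triangulated category, and smoothness together with compactness gives Serre duality, so $T$ carries a Serre functor $S = (-)\otimes \omega_X[\dim_{\bC}X]$. This already shows that the \emph{obvious} candidates cannot work: for every $a$ the functor $\Hom_T(a,-)^{*}$ is representable (by $S a$, since $\Hom_T(a,-)^{*}\simeq \Hom_T(-,Sa)$) and $\Hom_T(-,a)$ is representable (by $a$). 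Hence any non-representable $H$ must be genuinely more subtle than a single (co)representable functor or its Serre dual, and the whole difficulty is concentrated in producing it.

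Next I would isolate the structural consequences of ``very simple'' that make a candidate functor of finite type. The key point is that $X$ has no proper positive-dimensional analytic subset: therefore the torsion subsheaf of any coherent sheaf is supported on a finite set of points, every torsion-free coherent sheaf is generically locally free, $H^{0}(X,\mathcal{O}_X)=\bC$, and there are neither nonconstant meromorphic functions nor nontrivial effective divisors. These vanishing and rigidity statements are exactly what is needed to bound $\sum_{n}\dim_{\bC} H(b[n])$ for the functor constructed below; they play the role that ``no curve'' plays in the surface case of Theorem \ref{bb2}, and this is where the hypothesis $\dim_{\bC}X\ge 2$ enters essentially.

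With this in hand I would construct $H$ explicitly in categorical terms, generalizing the Bondal--Van den Bergh surface functor by replacing skyscraper data along a curve with the local data of a single point $x\in X$ combined with the global structure sheaf and Serre duality; concretely $H$ is built from the behaviour of an object along the infinitesimal thickenings $\mathcal{O}_{X}/\mathfrak{m}_x^{\,n}$ of $x$, arranged so that the finiteness of the previous paragraph forces $H$ to be of finite type while the underlying system does not stabilize to any single coherent complex. One then checks directly from the axioms that $H$ is contravariant and cohomological. Assuming $H\simeq \Hom_T(-,a)$, I would test this isomorphism against $\mathcal{O}_X$, against $\omega_X$, and against the skyscrapers $\mathcal{O}_x$ in order to pin down the numerical type, that is the rank and Euler-characteristic data, that a representing object $a$ would be forced to carry.

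The main obstacle, and the real content of the theorem, is this last step: showing that no object $a$ with the forced invariants can exist in $D^b({\rm Coh}\,X)$. The strategy is a proof by contradiction. An $a$ realizing these invariants would possess a cohomology sheaf whose support, or whose determinant line bundle, produces a proper positive-dimensional subvariety of $X$ (equivalently a nonconstant meromorphic function or a nontrivial effective divisor), directly contradicting very-simplicity. Converting the categorical data carried by $H$ into such a geometric witness is precisely where the absence of subvarieties of \emph{every} dimension, not merely of curves, is used, and it is the step demanding the most care. Once this is established, $H$ is a cohomological functor of finite type on $D^b({\rm Coh}\,X)$ that is not representable, so $D^b({\rm Coh}\,X)$ is not saturated, completing the proof.
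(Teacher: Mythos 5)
There is a genuine gap: the functor $H$ is never actually constructed, and the two mechanisms you gesture at do not work as described. A functor ``built from the behaviour of an object along the infinitesimal thickenings $\sO_X/{\mathbf m}_{X,x}^{\,n}$'' is not a definition; the natural candidates of that shape fail on both counts you need. For fixed $n$ the functor $\Hom_{\sD}(-,\sO_X/{\mathbf m}_{X,x}^{\,n})$ is representable, and the colimit over $n$ is not of finite type (already on $\sO_X$ it is infinite-dimensional). Your finiteness inputs ($H^0(X,\sO_X)=\bC$, no meromorphic functions, no divisors) are true for very simple $X$ but are not what controls $\sum_n\dim H(b[n])$ for any functor you have specified. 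Likewise, the proposed contradiction --- that a representing object would carry a cohomology sheaf whose support or determinant produces a positive-dimensional subvariety --- has no mechanism behind it: very simple manifolds carry plenty of coherent sheaves ($\sO_X^{\oplus r}$, ideal sheaves of points, torsion-free sheaves of any rank), and nothing about representing a finite-type cohomological functor forces a determinant with sections or a positive-dimensional support. The Serre-functor remarks are correct but tangential; the paper deliberately avoids that route.

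The missing ideas are precisely the content of the paper's proof. First, one tilts ${\rm Coh}\,X$ at the torsion pair $(\sT,\sF)$ of torsion and torsion-free sheaves to obtain a new heart $^p\sC$ with torsion pair $(\sF,\sT[-1])$ and $D^b(^p\sC)\simeq\sD$; very-simplicity enters here to show $^p\sC$ has \emph{finite length} (torsion sheaves have finite support, so a rank-plus-length induction terminates) --- note that ${\rm Coh}\,X$ itself is not artinian, so tilting is essential. Second, one embeds $^p\sC$ into Gabriel's category of left exact functors, takes $e=\oplus_{\lambda}\hat{c_{\lambda}}$ the sum of injective hulls of all simple objects, and sets $E(-)=\Hom_{D^b(\hat{^p\sC})}(\tilde{\iota}(-),e)$; finite length of $^p\sC$ gives $\dim E(c)=\ell(c)<\infty$. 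Third, non-representability comes not from producing a forbidden subvariety but from a local computation: a representing object may be taken in $^p\sC$, written as an extension of $t[-1]$ by $f$ with $t$ supported on finitely many points and $f$ locally free off a finite set; for $x$ outside these sets one has $\Hom_{\sC}(\bC_x,t)=0$ and ${\mathcal Ext}^1(\bC_x,f)=0$ because $\sO_{X,x}$ is regular of dimension $\ge 2$ and $f_x$ is free (this is where $\dim_{\bC}X\ge 2$ is used), so $\Hom_{\sD}(\bC_x[-1],p)=0$, whereas $E(\bC_x[-1])=\bC$ since $\bC_x[-1]$ is simple in $^p\sC$. Without the tilted finite-length heart and this pointwise vanishing, the proposal has no path to either the finite-type property or the contradiction.
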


We also give a non-representable cohomological functor of finite type 
explicitly in categorical terms (Theorem(\ref{unrepresented})). 

As in Theorem (\ref{bb2}), one of crucial observations 
is that the heart $^{p}{\rm Coh}\, X$ of the $t$-structure given by tilting by 
torsion pair is of finite length (Theorem (\ref{artinnoether})). {\it As the referee pointed out, Theorem (\ref{artinnoether}) is also a special case of a result of Meinhardt} (\cite{Me07}, Proposition 3.5, the case $p =1$) when $X$ is a very simple complex tori, and the argument there is in fact valid for any very simple manifold. In their proof of Theorem (\ref{bb2}), Bondal and Van den Bergh showed this property by using the invariance $^{p}{\rm Coh}\, X$ under the derived dual, which is true only in dimension $2$. As in \cite{Me07}, we replace this argument just by a simple diagram chasing. As it is so simple, we shall give a full proof, too. Bondal and Van den Bergh then derived unsaturatedness in Theorem (\ref{bb2}) {\it by argue by contradiction} based on the uniqueness of Serre functor and the 
classification 
of saturated derived categories of finite length (\cite{RV02}, Lemma V.1.1). This is 
an elegant argument but tell nothing about which functor is in fact non-representable. We replace this argument again by a more elementary one which is based on the embedding of simple objects into 
their injective hulls in an enlarged category. This provides us an explicit example of non-representable cohomological functor of finite type in categorical 
terms. The argument here is inspired by a very impressive example in \cite{BV03} Section 2, 2.5 and 
an argument in \cite{RV02} Lemma V.1.1.

Besides \cite{BV03}, \cite{Me07} and \cite{RV02}, our work is also related to \cite{Ne96}, 
\cite{Ro08}, \cite{TV08}, \cite{Ve04} and \cite{Ve08} in some sense. 
\par
\vskip 4pt
\noindent {\it Acknowledgements.} This note grew out from the working seminar 
on derived categories and DG-categories held at Osaka University in Fall 2009 
under Professor Atsushi Takahashi. First of all, I would like to express 
my best thank to him for showing me the very interesting paper \cite{BV03} and for asking me to introduce this paper in his seminar. The members there, especially Professors Atsushi Takahashi, Akira Fujiki, Moto-o Uchida and Mister Kotaro Kawatani, gave me several valuable suggestions, corrections and comments. I would like to express my thank to all of the seminar members. I also would like to thank to the referee for informing an important paper \cite{Me07} and for pointing out many mistakes in the first and the second versions. Last but not least at all, it is my honor to dedicate this note to Professor Dr. Fabrizio Catanese on the occasion of his sixtieth birthday, from whom I learned much about mathematics and other things since I met him in 1993 in Bonn.

\section{Proof of Main Theorem}
\noindent

Let $X$ be a very simple manifold. From now, for simplicity, 
we denote the abelian category ${\rm Coh}\, X$ 
of coherent sheaves on $X$ by $\sC$ 
and the bounded derived category $D^b({\rm Coh}\, X) = D^b(\sC)$ of coherent sheaves on $X$ by $\sD$. We note that 
$\sC$ is then a full subcategory of $\sD$. 
In particular, $\Hom_{\sC}(a, b) = \Hom_{\sD}(a, b)$ 
for any $a, b \in {\rm Ob}\,(\sC)$. 

Let $\sT$ be the full subcategry of $\sC$ consisting of coherent 
torsion sheaves on $X$. We denote by $\sF$ 
the full subcategory of $\sC$ consisting of coherent torsion free sheaves 
on $X$ (\cite{GR84}, Annex for definitions in analytic setting). 

\begin{lemma}\label{torsion} Consider the ordered pair $(\sT, \sF)$ of 
$\sT$ and $\sF$ defined above. 
\begin{enumerate} 
\item $(\sT, \sF)$ is a torsion pair of $\sC$ in the sense that this satisfies 

(i) $\Hom_{\sC}(t, f) = 0$ for any $t \in {\rm Ob}\, (\sT)$ 
and $f \in {\rm Ob}\, (\sF)$, and 

(ii) For any $c \in {\rm Ob}\, (\sC)$, there are 
$t \in {\rm Ob}\, (\sT)$, $f \in {\rm Ob}\, (\sF)$ 
and an exact sequence in $\sC$:
$$0 \lra t \lra c \lra f \lra 0\,\, .$$
\item The torsion pair $(\sT, \sF)$ is cotilting of $\sC$ in the sense that 
for any $c \in {\rm Ob}\, (\sC)$, there are $h \in {\rm Ob}\, (\sF)$
and an epi-morphism $h \lra c$ in $\sC$. 
\end{enumerate} 
\end{lemma}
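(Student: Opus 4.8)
The plan is to treat the two statements separately: part (1) is standard torsion theory, while part (2) is where the hypothesis \emph{very simple} does the real work, and is where I expect the difficulty to lie.

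\emph{Part (1).} For (i), given $t \in {\rm Ob}\,(\sT)$, $f \in {\rm Ob}\,(\sF)$ and a morphism $\varphi \colon t \to f$, the image $\im \varphi$ is at once a quotient of the torsion sheaf $t$, hence torsion, and a subsheaf of the torsion free sheaf $f$, hence torsion free; therefore $\im \varphi = 0$. For (ii), I would take $t \subseteq c$ to be the torsion subsheaf, the subsheaf of germs annihilated by a non zero divisor of $\sO_X$. Since $X$ is smooth each $\sO_{X,x}$ is an integral domain, and the torsion subsheaf of a coherent sheaf is again coherent in the analytic category (\cite{GR84}, Annex). Then $f := c/t$ is torsion free, since a torsion germ of $c/t$ would lift to a germ of $c$ representing a torsion class, forcing it into $t$ by maximality. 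This yields the exact sequence $0 \to t \to c \to f \to 0$ and shows $(\sT,\sF)$ is a torsion pair.

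\emph{Part (2), reduction.} The support of a nonzero torsion sheaf $t$ is a proper closed subvariety of $X$; since $X$ is very simple its irreducible components are points, so $\mathrm{Supp}(t)$ is a finite set $S$. A coherent sheaf supported on a finite set is generated by its finitely many global sections, so there is a free, in particular torsion free, sheaf $\sO_X^{\oplus N}$ with an epimorphism $\sO_X^{\oplus N} \to t$. For the torsion free quotient $f$ the identity is already an epimorphism from a torsion free sheaf, so the remaining task is to \emph{combine} these two covers into a single torsion free sheaf mapping onto $c$.

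\emph{Part (2), the combination and the expected obstacle.} Let $\xi \in \Ext^1_{\sC}(f,t)$ be the class of $0 \to t \to c \to f \to 0$, and fix an epimorphism $\psi \colon E \to t$ from a torsion free sheaf $E$ (taking $E = \sO_X^{\oplus N}$ when possible), with torsion free kernel $\ker\psi$. I would seek a commutative diagram of short exact sequences
\[
\begin{CD}
0 @>>> E @>>> h @>>> f @>>> 0\\
@. @VV{\psi}V @VVV @| @.\\
0 @>>> t @>>> c @>>> f @>>> 0
\end{CD}
\]
whose middle vertical arrow is an epimorphism (by the five lemma, as the left arrow is epi and the right is the identity). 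Such a diagram exists precisely when $\xi$ lies in the image of $\psi_* \colon \Ext^1_{\sC}(f,E) \to \Ext^1_{\sC}(f,t)$, equivalently when the connecting obstruction $\partial(\xi) \in \Ext^2_{\sC}(f,\ker\psi)$ vanishes; and the middle term $h$ is then automatically torsion free, being an extension of the torsion free $f$ by the torsion free $E$. Because $t$ is supported on the finite set $S$, all of these $\Ext$ groups localize to $\bigoplus_{p \in S}\Ext^\bullet_{\sO_{X,p}}(f_p, -)$, so the obstruction reduces to a purely local statement about finite length modules over the regular local rings $\sO_{X,p}$. I expect this localized lifting to be the main obstacle. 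In dimension two it is automatic, since a torsion free $f$ then has projective dimension at most one and $\Ext^2(f, -) = 0$, so any free cover of $t$ works; in higher dimension $\sO_X^{\oplus N}$ need not suffice, and I would instead allow a suitably chosen torsion free (not necessarily locally free) cover $\psi \colon E \to t$ assembled from the local syzygies of the $t_p$, so as to place $\xi$ in the image of $\psi_*$. Granting this, $h$ is the required torsion free sheaf and the epimorphism $h \to c$ exhibits $(\sT,\sF)$ as cotilting.
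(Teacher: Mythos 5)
Your part (1) is fine and is exactly what the paper treats as clear. The problem is in part (2): you have correctly located the difficulty, but you have not resolved it, and the step you propose to ``grant'' is not an auxiliary local computation --- it is essentially the statement being proved. Indeed, given any torsion free $h$ with an epimorphism onto $c$, the preimage $E$ of $t$ in $h$ is a torsion free sheaf surjecting onto $t$ for which $\xi$ lifts; conversely such an $E$ yields $h$. So ``there exists a torsion free cover $\psi : E \lra t$ with $\xi \in {\rm im}\,\psi_*$'' is equivalent to Lemma (2.1)(2), and you give no construction of $E$. Your proposed reduction to a local statement is also not justified: $\Ext^1_{\sC}(f,t)$ does localize to the finite support of $t$, but the obstruction group $\Ext^2_{\sC}(f,\ker\psi)$ does not, since $\ker\psi$ is torsion free with full support; and in dimension $\ge 3$ a torsion free $f$ can have local projective dimension $\ge 2$, so the surface argument via $\Ext^2(f,-)=0$ is unavailable. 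The ``local syzygies'' suggestion is a direction, not a proof.

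The paper avoids the obstruction entirely by flipping the decomposition, following \cite{BV03} Section 5, 5.6. Using noetherianity of ${\rm Coh}\,X$, choose a maximal subsheaf $d \subset c$ with $d \cap t = 0$; then $d$ is torsion free and $s := c/d$ contains $t$ as an essential subsheaf. Since $X$ is very simple, ${\rm Supp}\,(t)$ is a finite set $\{x_1,\dots,x_n\}$, so $t \cap (\prod_{i}{\mathbf m}_{X,x_i})^N s = 0$ for $N \gg 0$, and essentiality forces $(\prod_{i}{\mathbf m}_{X,x_i})^N s = 0$; hence $s$ itself is supported on finitely many points and admits an epimorphism $\varphi : \sO_X^{\oplus M} \lra s$. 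Pulling back $0 \lra d \lra c \lra s \lra 0$ along $\varphi$ always succeeds (a fibre product needs no $\Ext^2$ vanishing, unlike your pushforward lifting), the middle term $h$ is an extension of $\sO_X^{\oplus M}$ by $d$ and hence torsion free, and $h \lra c$ is an epimorphism by the five lemma. In effect the paper constructs the cover $E$ you needed (as the preimage of $t$ in $h$) without ever confronting the obstruction class; your route, as written, stops exactly where the real work begins.
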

\begin{proof}
The assertion (1) is clear. Let us show the assertion (2). The statement is clearly true 
if $\sC$ has enough projectives. However, this is known only in 
$\dim\, X \le 2$ (\cite{Sch82} Theorem 2). So, we have to take another approach. The approach here is identical to \cite{BV03} Section 5, 5.6. Steps 2 and 3 for surfaces. {\it We repeat their argument here just to make sure that our assumption ``very simple" is enough to conclude.} Take $c \in {\rm Ob}\, (\sC)$ and consider the exact sequence
$$0 \lra t \lra c \lra f \lra 0$$
which exists due 
to (ii), where $t \in {\rm Ob}\, (\sT)$ and $f \in {\rm Ob}\, (\sF)$. 
Recall that $\sC = {\rm Coh}\, X$ is a noetherian category for any compact complex manifold $X$, that is, for a given coherent sheaf, any ascending chain 
of its subsheaves is stationally. For instance, this follows from the induction on the dimension of the support. 
Take then (one of) the maximal subsheaf $d \subset c$ such that 
$d \cap t = 0$. Since $\sC$ is noetherian and $0 \cap t = 0$, such a subsheaf $d$ certainly exists (by one of the three equivalent definitions 
of noetherian property). Since $d \subset c$, $d \cap t = 0$ 
and $t$ is the torsion part of $c$, we have $d \in \sF$. Put $s := c/d$. 
Then, by the choice of $d$, it follows that 
$t \subset s$ and that $s$ is an esstential extension of $t$, in the sense that $s' \cap t \not= 0$ for any non-zero subsheaf $s'$ of $s$. Now we use the fact that 
$X$ is very simple. Then ${\rm Supp}\, (t)$ consists of finitely many points, 
say, $\{x_1, x_2, \cdots , x_n \}$. Thus, there are very large integer 
$N$ such that 
$$t \cap (\prod_{i=1}^{n} {\mathbf m}_{X, x_i})^N s = 0\,\, ,$$
where ${\mathbf m}_{X, x_i}$ is the maximal ideal sheaf of $x_i$. 
(This is true for any coherent sheaf containing $t$.) Since $s$ 
is an esstential extension of $t$, it follows that 
$$(\prod_{i=1}^{n} {\mathbf m}_{X, x_i})^N s = 0\,\, ,$$
whence ${\rm Supp}\, (s) = {\rm Supp}\, (t)$. In particular, 
${\rm Supp}\, (s)$ 
also consists of finitely many points. Hence, there are very large integer 
$M$ and an epi-morphism 
$\varphi : \sO_{X}^{\oplus M} \lra s$ in $\sC$. Then, pulling back the exact sequence
$$0 \lra d \lra c \lra s = c/d \lra 0$$
by $\varphi$, we have a commutative diagram of exact lows:
$$
\begin{CD} 
0 @>>> d @>>> h @>>> \sO_X^{\oplus N} @>>> 0\\ 
@. @VV{id}V @VV{\tilde{\varphi}}V @VV{\varphi}V @.\\ 
0 @>>> d @>>> c @>>> s @>>> 0
\end{CD}
$$
By the five lemma, $\tilde{\varphi}$ is also an epi-morphism. Recall also that 
$d \in \sF$. Thus $h \in {\rm Ob}\, (\sF)$, because $\sF$ is closed under extensions. This proves the assertion (2).
\end{proof}
\begin{definition} We define the full subcategories 
$^p\sD^{\ge 0}$, $^p\sD^{\le 0}$ and $^p\sC$ of $\sD$ by
$$^p\sD^{\ge 0} := \{x \in {\rm Ob}\,(\sD)\, \vert\, \sH^0(x) \in 
{\rm Ob}\,(\sF), \sH^{i}(x) = 0 \forall i \le -1\}$$
$$^p\sD^{\le 0} := \{x \in {\rm Ob}\,(\sD)\, \vert\, \sH^1(x) \in 
{\rm Ob}\,(\sT), \sH^{i}(x) = 0 \forall i \ge 2\}$$
$$^p\sC\, =\, ^p\sD^{\ge 0} \cap ^p\sD^{\le 0}\, =\,  
\{x \in {\rm Ob}\,(\sD)\, \vert\, \sH^0(x) \in 
{\rm Ob}\,(\sF), \sH^1(x) \in 
{\rm Ob}\,(\sT), \sH^{i}(x) = 0 \forall i \not= 0, 1\}\,\, .$$
Here $\sH^i(x) \in {\rm Ob}\, (\sC)$ is the $i$th cohomology sheaf 
of $x \in {\rm Ob}\,(\sD)$. 
\end{definition}
By definition, both $\sF$ and $\sT[-1]$ are full subcategories 
of $^p\sC$. 
\begin{theorem}\label{tstructure} Under the notation above, 

\begin{enumerate} 
\item The full subcategory $\sC$ of $\sD$ is an abelian category, in which 
$$0 \lra a \xrightarrow{\alpha} b 
\xrightarrow{\beta} c \lra 0$$
is exact if and only if 
$$a \xrightarrow{\alpha} b \xrightarrow{\beta} c \lra a[1]$$
is a distinguished triangle in $\sD$ and $a, b, c \in\, {\rm Ob}\, (\sC)$. 
\item The full subcategory $^p\sC$ is an abelian category, in which 
$$0 \lra a \xrightarrow{\alpha} b 
\xrightarrow{\beta} c \lra 0$$
is exact if and only if 
$$a \xrightarrow{\alpha} b \xrightarrow{\beta} c \lra a[1]$$
is a distinguished triangle in $\sD$ and $a, b, c \in\, {\rm Ob}\, (^p\sC)$. 
\item The original $\sD$ is also the bounded derived category 
of the new abelian category $^p\sC$, i.e., $\sD = D^b(^p\sC)$. Moreover, 
the ordered pair $(\sF, \sT[-1])$ forms a torsion 
pair of $^p\sC$ in the sense of Lemma (\ref{torsion}) (1) and 
it is now tilting of $^p\sC$ in the sense that for any $x \in\, {\rm Ob}\, (^p\sC)$, there are $f \in {\rm Ob}\, (\sF)$ and a mono-morphism $x \lra f$ 
in $^p\sC$. 
\end{enumerate} 
\end{theorem}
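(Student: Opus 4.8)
The plan is to read this as the Happel--Reiten--Smal\o{} tilting construction applied to the cotilting torsion pair $(\sT,\sF)$ of Lemma (\ref{torsion}), and to organize the whole statement around a single claim: that $({}^p\sD^{\le 0},{}^p\sD^{\ge 0})$ is a bounded $t$-structure on $\sD$ whose heart is ${}^p\sC$. Granting this, both abelian-ness assertions and the two exactness/triangle dictionaries in (1) and (2) are instances of the general fact that the heart of a $t$-structure is abelian and that a sequence of heart objects is short exact exactly when it underlies a distinguished triangle of $\sD$ (\cite{BBD83}, Chapter 1, Th\'eor\`eme 1.3.6). In particular assertion (1) is precisely this fact for the \emph{standard} $t$-structure $(\sD^{\le 0}_{\mathrm{std}},\sD^{\ge 0}_{\mathrm{std}})$ on $\sD=D^b(\sC)$, whose heart is $\sC$; so the real content lies in the tilted $t$-structure and in assertion (3).

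To verify the tilted $t$-structure I would check the three axioms. The inclusions ${}^p\sD^{\le 0}\subset{}^p\sD^{\le 1}$ and ${}^p\sD^{\ge 1}\subset{}^p\sD^{\ge 0}$ are immediate from the cohomology-sheaf descriptions. For orthogonality I want $\Hom_{\sD}(x,y)=0$ whenever $x\in{}^p\sD^{\le 0}$ and $y\in{}^p\sD^{\ge 1}$: here $x\in\sD^{\le 1}_{\mathrm{std}}$ and $y\in\sD^{\ge 1}_{\mathrm{std}}$, so splitting off $\tau^{\ge 2}y$ and $\tau^{\le 0}x$ (which contribute nothing by ordinary orthogonality) reduces the group to $\Hom_{\sC}(\sH^1(x),\sH^1(y))$, and this vanishes by Lemma (\ref{torsion})(1)(i) since $\sH^1(x)\in\sT$ and $\sH^1(y)\in\sF$. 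The remaining and most delicate axiom is the truncation triangle: for each $x\in{\rm Ob}\,(\sD)$ a distinguished triangle $u\lra x\lra v\lra u[1]$ with $u\in{}^p\sD^{\le 0}$ and $v\in{}^p\sD^{\ge 1}$. I would build it by correcting the ordinary truncation of $x$ in cohomological degree $1$ using the torsion decomposition $0\lra t\lra\sH^1(x)\lra f\lra 0$ ($t\in\sT$, $f\in\sF$) of Lemma (\ref{torsion})(1)(ii): $u$ should carry $\sH^i(x)$ for $i\le 0$ together with the torsion part $t$ in degree $1$, while $v$ carries $f$ in degree $1$ and $\sH^i(x)$ for $i\ge 2$. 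Tracking the long exact cohomology sequence confirms membership, and producing the triangle itself I would obtain by splicing the ordinary truncation triangles of $x$ with the triangle in $\sD$ realizing the torsion decomposition of $\sH^1(x)$, invoking the octahedral axiom to assemble $u$.

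For assertion (3) the tilted $t$-structure is bounded because every object of $\sD=D^b(\sC)$ has bounded ordinary cohomology, hence bounded ${}^p\sC$-cohomology. The torsion-pair claim for $(\sF,\sT[-1])$ in ${}^p\sC$ is then quick: condition (i) holds since $\Hom_{\sD}(f,t[-1])=\Ext^{-1}_{\sC}(f,t)=0$ for $f\in\sF$, $t\in\sT$, and condition (ii) is the short exact sequence $0\lra\sH^0(x)\lra x\lra\sH^1(x)[-1]\lra 0$ in ${}^p\sC$ coming from the ordinary truncation of $x\in{}^p\sC$, with torsion part $\sH^0(x)\in\sF$ and torsion-free part $\sH^1(x)[-1]\in\sT[-1]$. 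For the tilting refinement I would use that cotilting dualizes: given $c\in\sC$, a surjection $f_0\lra c$ with $f_0\in\sF$ (Lemma (\ref{torsion})(2)) has kernel $f_1\in\sF$ as well, because a subsheaf of a torsion-free sheaf is torsion-free, so every coherent sheaf has a two-term torsion-free resolution $0\lra f_1\lra f_0\lra c\lra 0$. Representing a given $x\in{}^p\sC$ by a two-term complex $[\,f\xrightarrow{\varphi}w\,]$ of torsion-free sheaves in degrees $0$ and $1$, with $\ker\varphi=\sH^0(x)$ and $\mathrm{coker}\,\varphi=\sH^1(x)$, the canonical map $x\lra f$ is then a monomorphism in ${}^p\sC$ with target $f\in\sF$, which is exactly the tilting property.

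The hard part will be two interlocking points. First, $\sD=D^b({}^p\sC)$ asks the realization functor $D^b({}^p\sC)\lra\sD$ to be an equivalence; essential surjectivity is easy, since ${}^p\sC$ generates $\sD$ (each $c\in\sC$ lies, via $0\lra t\lra c\lra f\lra 0$, in a triangle with vertices $f\in\sF\subset{}^p\sC$ and $t=(t[-1])[1]$, a shift of $t[-1]\in\sT[-1]\subset{}^p\sC$, and $\sC$ generates $\sD$), but full faithfulness amounts to matching the $\Ext^{\ge 2}$-groups computed in ${}^p\sC$ with morphisms in $\sD$, and this is the genuine technical crux; here I would lean on the fact that tilting a bounded $t$-structure on $D^b(\sC)$ again has derived category $\sD$. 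Second, and closely related, is realizing an arbitrary $x\in{}^p\sC$ by a two-term complex of torsion-free sheaves with the \emph{correct extension class}, not merely the correct cohomology sheaves: the obstruction lives in an $\Ext^2$-group, and I expect to have to thread the torsion-free resolutions of $\sH^0(x)$ and $\sH^1(x)$ through the octahedral axiom to make it vanish.
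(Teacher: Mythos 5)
Your overall plan---verify that $({}^p\sD^{\le 0},{}^p\sD^{\ge 0})$ is a bounded $t$-structure, deduce (1) and (2) from the general theory of hearts, and then check the torsion-pair and tilting claims directly---is sound and, modulo citations, reproduces what the paper does: the paper obtains (1) and (2) from \cite{KS90} Proposition 10.1.11 together with \cite{HRS96} Chapter I, Proposition 2.1, and the torsion-pair/tilting claim from \cite{HRS96} Chapter I, Proposition 3.2. Your verification of the axioms of the tilted $t$-structure, the orthogonality via Lemma (\ref{torsion})(1)(i), and the torsion-pair conditions for $(\sF,\sT[-1])$ in ${}^p\sC$ are all correct in outline. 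Your worry about an $\Ext^2$-obstruction to representing $x\in{}^p\sC$ by a two-term complex of torsion-free sheaves is unfounded: write $x\simeq[c^0\xrightarrow{d}c^1]$ in degrees $0,1$, choose an epimorphism $\pi:w\lra c^1$ with $w\in{\rm Ob}\,(\sF)$ by Lemma (\ref{torsion})(2), and replace $c^0$ by the fibre product $f=c^0\times_{c^1}w$. The map of complexes $[f\to w]\lra[c^0\to c^1]$ is degreewise surjective with acyclic kernel $[\ker\pi\xrightarrow{\id}\ker\pi]$, hence a quasi-isomorphism; and $f$ is torsion-free because a torsion local section $(a,b)$ of $f\subset c^0\oplus w$ must have $b=0$ (as $w\in\sF$), hence $a\in\ker d=\sH^0(x)\in{\rm Ob}\,(\sF)$, hence $a=0$. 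With this lemma in place your monomorphism $x\lra f$ does have cone $w\in{\rm Ob}\,(\sF)\subset{\rm Ob}\,({}^p\sC)$, so the tilting property follows as you say.

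The genuine gap is the first claim of assertion (3), $\sD=D^b({}^p\sC)$. You correctly isolate full faithfulness of the realization functor $D^b({}^p\sC)\lra\sD$ as the crux, but your proposed resolution---to ``lean on the fact that tilting a bounded $t$-structure on $D^b(\sC)$ again has derived category $\sD$''---is exactly the statement to be proved, so the argument is circular as written. This is not a formality: for a general bounded $t$-structure on $D^b(\sC)$ the realization functor from the derived category of the heart is \emph{not} an equivalence in general, and it is precisely the cotilting hypothesis of Lemma (\ref{torsion})(2) that makes it one here. The paper discharges this step by citing \cite{BV03} Proposition 5.4.3, which rests on Neeman's theorem on the derived category of an exact category \cite{Ne90} (roughly: the exact subcategory $\sF$ cogenerates $\sC$ and generates ${}^p\sC$ in the appropriate sense, and Neeman's theorem identifies both bounded derived categories with that of the exact category $\sF$); one could equally invoke \cite{HRS96} Chapter I directly. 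Either way, this step requires an external input rather than the conclusion itself, and your write-up must supply one.
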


\begin{proof} 
Since $\sC$ is the heart of $\sD$ under the standard $t$-structure 
of $\sD = D^b(\sC)$ (\cite{KS90} Examples 10.1.3, (i)), the assertion (1) follows 
from \cite{KS90} Proposition 10.1.11 and its proof. 
By \cite{HRS96} Chapter I, Proposition 2.1 (with shift by $1$), the pair of full subcategories $^p\sD^{\ge 0}$ and $^p\sD^{\le 0}$ defines another $t$-structure of $\sD$. Hence the new heart $^p\sC$ is an abelian category with desired characterization 
of exact sequences by \cite{KS90} Proposition 10.1.11 and its proof. The second half is proved 
by \cite{HRS96} Chapter I, Proposition 3.2.  Recall also that the torsion pair 
$(\sT, \sF)$ 
of $\sD$ is cotilting. Thus, the first part of the statement (3) follows from 
\cite{BV03} Proposition 5.4.3, which is based on the main result of 
\cite{Ne90}. 
\end{proof}
\begin{remark} In $\sC$, we have a standard exact sequence 
$$0 \lra {\bf m}_{X, x} \xrightarrow{\iota} \sO_X \lra \bC_x \lra 0$$
where $x$ is a closed point of $X$ and ${\bf m}_{X, x}$ 
is the maximal ideal sheaf of $x \in X$. This gives a distinguished 
triangle
$${\bf m}_{X, x} \xrightarrow{\iota} \sO_X \lra \bC_x \lra 
{\bf m}_{X, x}[1]\, ,$$
and then another distinguished triangle 
$$\bC_x[-1] \lra {\bf m}_{X, x} \xrightarrow{\iota} \sO_X \lra \bC_x = 
(\bC_x[-1])[1]\,$$
as well. In the second triangle, we have 
$\bC_x[-1] \in {\rm Ob}\, (\sT[-1])$ and ${\bf m}_{X, x} 
\in {\rm Ob}\, (\sF)$ and $\sO_X \in {\rm Ob}\, (\sF)$. 
Thus, we obtain an exact sequence
$$0 \lra \bC_x[-1] \lra {\bf m}_{X, x} \xrightarrow{\iota} \sO_X \lra 0$$
in $^p\sC$. In particular, the natural 
inclusion $\iota : {\bf m}_{X, x} \lra \sO_X$ in $\sC$ is no longer a mono-morphism in the new 
abelian category $^p\sC$. 
For essentially the same reason, the strictly descending chain in $\sC$
$$\sO_X \supset {\bf m}_{X, x} \supset {\bf m}_{X, x}^2 \supset \ldots \supset 
 {\bf m}_{X, x}^n \supset \ldots$$
is no longer a descending chain in $^p\sC$. 

However, we should also note that if 
$X$ contains a positive dimensional proper subvariety, say $Y \not= X$, 
then $\sO_Y$ and ${\bf m}_{Y, x}^n$ ($n$ being any 
positive integer) are elements of ${\rm Ob}\, (T)$ and
$$\sO_Y \supset {\bf m}_{Y, x} \supset {\bf m}_{Y, x}^2 
\supset \ldots \supset 
 {\bf m}_{Y, x}^n \supset \ldots$$
is a strictly descending chain in $\sC$ and 
$$\sO_Y[-1] \supset {\bf m}_{Y, x}[-1] \supset {\bf m}_{Y, x}^2[-1] 
\supset \ldots \supset 
 {\bf m}_{Y, x}^n[-1] \supset \ldots$$
is also a strictly descending chain in $^p\sC$.

So, in the next theorem, the fact that $X$ is very simple 
is crucial.
\end{remark}

\begin{theorem}\label{artinnoether} 
$^p\sC$ is of finite length, i.e., both noetherian and 
artinian.
\end{theorem}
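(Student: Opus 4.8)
The plan is to verify the ascending and descending chain conditions for subobjects of the abelian category ${}^p\sC$ separately, by using the two cohomology functors $\sH^0,\sH^1\colon {}^p\sC\lra\sC$ to push every chain in ${}^p\sC$ down to a chain of honest sheaves. Two preliminary observations drive everything. First, the integer $r(x):=\rk\,\sH^0(x)\ge 0$ is additive on short exact sequences of ${}^p\sC$: applying $\sH^{\bullet}$ to the distinguished triangle attached to such a sequence (Theorem (\ref{tstructure}) (2)) yields a six term exact sequence $0\lra\sH^0(a)\lra\sH^0(b)\lra\sH^0(c)\lra\sH^1(a)\lra\sH^1(b)\lra\sH^1(c)\lra 0$ in $\sC$, and since generic rank is additive on $\sC$ and vanishes on the torsion sheaves $\sH^1(\cdot)\in\sT$, the alternating sum of ranks gives $r(b)=r(a)+r(c)$. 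Second, $\sT[-1]$ is of finite length in ${}^p\sC$: a subobject of $t[-1]$ has vanishing $\sH^0$, hence lies again in $\sT[-1]$, so $\sT[-1]$ is an abelian subcategory equivalent to $\sT$, and on a very simple $X$ every torsion sheaf is supported on finitely many points and is therefore of finite length. I also record that $\sH^0$ sends a monomorphism of ${}^p\sC$ to a monomorphism of $\sC$ (since $\sH^{-1}$ of the cokernel vanishes), and that $\sH^0(x)\in\sF$ is torsion free, so a subsheaf of $\sH^0(\cdot)$ can never be a nonzero torsion sheaf.

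For the descending chain condition, take $x_1\supseteq x_2\supseteq\cdots$ inside some $x\in{}^p\sC$. By additivity the ranks $r(x_i)$ are non increasing, hence eventually constant, and from that index on each quotient $x_i/x_{i+1}$ has rank $0$, i.e. lies in $\sT[-1]$, say $x_i/x_{i+1}=s_i[-1]$ with $s_i\in\sT$. Feeding $0\lra x_{i+1}\lra x_i\lra s_i[-1]\lra 0$ into the long exact sequence gives $\sH^0(x_{i+1})\cong\sH^0(x_i)$ together with a short exact sequence $0\lra\sH^1(x_{i+1})\lra\sH^1(x_i)\lra s_i\lra 0$ in $\sC$. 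Thus $\{\sH^1(x_i)\}$ is a descending chain of subsheaves of the fixed finite length torsion sheaf $\sH^1(x_{i_0})$; it stabilizes, forcing $s_i=0$ and hence $x_i=x_{i+1}$ for $i\gg 0$. So ${}^p\sC$ is artinian.

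For the ascending chain condition, take $x_1\subseteq x_2\subseteq\cdots$ inside $x$. Since $\sH^0$ preserves monomorphisms, $\{\sH^0(x_i)\}$ is an ascending chain of subsheaves of $\sH^0(x)$, which stabilizes because $\sC$ is noetherian; from some index $i_0$ on, $\sH^0(x_i)=\sH^0(x_{i_0})$, and the torsion-into-torsion-free argument again shows $x_i/x_{i_0}\in\sT[-1]$. Passing to the fixed quotient $y:=x/x_{i_0}$, the induced ascending chain of $\sT[-1]$ subobjects $w_i:=x_i/x_{i_0}$ with quotients $q_i:=y/w_i$, the long exact sequences of $0\lra w_i\lra y\lra q_i\lra 0$ and of $0\lra w_{i+1}/w_i\lra q_i\lra q_{i+1}\lra 0$ produce an ascending chain of torsion free sheaves $\sH^0(q_i)\subseteq\sH^0(q_{i+1})$, each an overlattice of the fixed $\sH^0(y)$ with cokernel contained in a torsion sheaf.

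The main obstacle is exactly to bound this last chain of overlattices, and this is the one place where being very simple beyond $\dim_{\bC}X\ge 2$ is indispensable (the failure being visible already on a curve, $\sO_X\subsetneq\sO_X(p)\subsetneq\cdots$, or on a surface containing a curve $Y$ as in the Remark). On a very simple $X$ all torsion is supported in codimension $\ge 2$, so for a torsion free $E$ any overlattice $E\subseteq E'$ with $E'/E$ of codimension $\ge 2$ satisfies $E'^{\vee}\cong E^{\vee}$, whence $E'\subseteq E^{\vee\vee}$. Therefore all the $\sH^0(q_i)$ lie in the single coherent sheaf $\sH^0(y)^{\vee\vee}$, and the chain stabilizes by the noetherian property of $\sC$. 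Once $\sH^0(q_i)$ is constant, the long exact sequence expresses $\sH^1(q_{i+1})$ as a quotient of $\sH^1(q_i)$, a descending chain of quotients of the fixed finite length sheaf $\sH^1(q_{i_0})$, which stabilizes and forces the kernels $w_{i+1}/w_i$ to vanish. Hence $x_i$ stabilizes, ${}^p\sC$ is noetherian, and combined with the previous paragraph it is of finite length.
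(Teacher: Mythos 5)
Your proof is correct, and its first half coincides with the paper's argument. The additive invariant $r=\rk\,\sH^0$ and the length of $\sH^1$ are exactly the paper's two ``key inequalities'' (I) and (II): the torsion-pair decomposition $0\lra f_n\lra a_n\lra t_n[-1]\lra 0$ used in the paper has $f_n=\sH^0(a_n)$ and $t_n=\sH^1(a_n)$, so your long-exact-sequence manipulations are the paper's diagram chases written more compactly, and the artinian half is essentially identical. The noetherian half is where you genuinely diverge. The paper handles it by citing \cite{BV03} (Section 5.6, Step 4 and Lemma 5.5.2) and by asserting that the artinian argument runs ``with inequalities reversed,'' bounding $\rk (f_n)$ by $\rk (f_\infty)$ and $\ell(t_n)$ by $\ell(t_\infty)$ for the ambient object $a_\infty=a$. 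You instead stabilize $\sH^0(x_i)$ directly by noetherianity of ${\rm Coh}\,X$ and then control the torsion parts from the quotient side, trapping the ascending chain of overlattices $\sH^0(y)\subseteq\sH^0(q_i)$ inside the reflexive hull $\sH^0(y)^{\vee\vee}$; the input ${\mathcal Hom}(T,\sO_X)={\mathcal Ext}^1(T,\sO_X)=0$ for $T$ supported on points is the same depth computation the paper performs in the proof of Claim (\ref{setting}), so no new ingredient is required. Your route is longer, but it addresses a point that the mirror-image sketch passes over: for an ascending chain the natural map $\sH^1(x_i)\lra\sH^1(x_\infty)$ need not be injective (its kernel is a quotient of $\sH^0(x_\infty/x_i)$), so the bound $\ell(t_n)\le\ell(t_\infty)$ is not immediate --- for instance, for a chain of $\sT[-1]$-subobjects of an object $f$ of $\sF$ one has $t_\infty=0$, and the correct bound is the length of $f^{\vee\vee}/f$, which is precisely what your reflexive-hull step supplies. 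If you keep this version, the two small things worth writing out are the compatibility, as $i$ varies, of the embeddings $\sH^0(q_i)\hookrightarrow\sH^0(y)^{\vee\vee}$ (functoriality of the double dual) and the coherence of $\sH^0(y)^{\vee\vee}$, so that noetherianity of $\sC$ indeed applies.
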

As remarked in the introduction, this is also a special case 
of \cite{Me07}, Proposition 3.5. However, as 
the fact is very crucial for us and the proof is so simple, 
we shall give a full proof.
\begin{proof} By definition, it suffices to show the following two:
\begin{proposition}\label{artin}  $^p\sC$ is artinian.
\end{proposition}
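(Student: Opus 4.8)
The plan is to take an arbitrary descending chain $x_0 \supseteq x_1 \supseteq x_2 \supseteq \cdots$ of subobjects in $^p\sC$ and prove it is stationary, by controlling the two cohomology sheaves $\sH^0(x_n) \in \sF$ and $\sH^1(x_n) \in \sT$ separately. The basic device is the long exact cohomology sequence for the standard $t$-structure on $\sD$, attached to the short exact sequence $0 \lra x_n \lra x_0 \lra q_n \lra 0$ in $^p\sC$ (equivalently, the distinguished triangle $x_n \lra x_0 \lra q_n \lra x_n[1]$), where $q_n := x_0/x_n$. Since $q_n \in {}^p\sC$ has $\sH^i(q_n) = 0$ for $i \neq 0,1$, this reads
$$0 \lra \sH^0(x_n) \lra \sH^0(x_0) \lra \sH^0(q_n) \lra \sH^1(x_n) \lra \sH^1(x_0) \lra \sH^1(q_n) \lra 0,$$
with $\sH^0(x_n), \sH^0(q_n) \in \sF$ torsion free and $\sH^1(x_n), \sH^1(q_n) \in \sT$ torsion.

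First I would stabilize the degree zero part. The injection $\sH^0(x_n) \hookrightarrow \sH^0(x_0)$ shows $\rk\, \sH^0(x_n)$ is a non-increasing sequence of non-negative integers, hence constant for $n \geq n_0$. After truncating the chain, each quotient $\sH^0(x_0)/\sH^0(x_n)$ is then torsion; but the displayed sequence identifies this quotient with a subsheaf of $\sH^0(q_n) \in \sF$, which is torsion free. A torsion subsheaf of a torsion free sheaf vanishes, so $\sH^0(x_n) = \sH^0(x_0)$ for all $n \geq n_0$: the degree zero cohomology is already stationary.

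Next, with $\sH^0$ constant, the same sequence applied to the inclusions $x_m \hookrightarrow x_n$ (for $m \geq n \geq n_0$) forces $\sH^0(x_n/x_m) = 0$ by the same torsion-versus-torsion-free vanishing, so that $\sH^1(x_m) \hookrightarrow \sH^1(x_n)$. By functoriality of $\sH^1$ these monomorphisms are compatible, exhibiting $\{\sH^1(x_n)\}_{n \geq n_0}$ as a descending chain of subsheaves of the fixed torsion sheaf $\sH^1(x_{n_0}) \in \sT$. This is exactly where I would invoke the hypothesis that $X$ is very simple: a coherent torsion sheaf on $X$ is supported on a proper closed subvariety, which here can only be a finite set of points, so every object of $\sT$ has finite length as a sheaf. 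Hence this descending chain of subsheaves stabilizes, and $\sH^1(x_n)$ is constant for $n \geq n_1 \geq n_0$.

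Finally I would conclude that the chain itself is stationary. For $n \geq n_1$ both $\sH^0(x_{n+1}) \lra \sH^0(x_n)$ and $\sH^1(x_{n+1}) \lra \sH^1(x_n)$ are isomorphisms; feeding this into the long exact sequence for $0 \lra x_{n+1} \lra x_n \lra q \lra 0$ forces $\sH^0(q) = 0$ and then $\sH^1(q) = 0$, whence $q = 0$ in $^p\sC$ and $x_{n+1} = x_n$. The only genuine obstacle is the middle step, namely ruling out an indefinitely shrinking degree zero part; this is resolved by the rank argument combined with the incompatibility of torsion and torsion free sheaves, while the finiteness that tames the degree one part is precisely the finite length of coherent torsion sheaves guaranteed by very simplicity.
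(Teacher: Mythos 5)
Your proof is correct, and it reaches the paper's conclusion by the same two-step strategy --- first stabilize the torsion-free part by a rank bound, then the torsion part by a length bound, with very simplicity entering exactly where you put it, namely in forcing torsion sheaves to be supported on finite point sets and hence of finite length --- but your technical implementation is genuinely different and noticeably shorter. The paper decomposes each $a_n$ via the torsion pair $(\sF,\sT[-1])$ of $^p\sC$ as $0\to f_n\to a_n\to t_n[-1]\to 0$ and then needs several diagram chases (repeated use of $\Hom_{^p\sC}(\sF,\sT[-1])=0$, the snake lemma, and back-and-forth translation between short exact sequences in $^p\sC$ and distinguished triangles in $\sD$) to show that the quotients $f_m/f_n$ stay in $\sF$ and the quotients $t_m[-1]/t_n[-1]$ stay in $\sT[-1]$, which is what lets it read off exact sequences in ${\rm Coh}\,X$ and apply rank and length. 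You get all of that in one stroke from the six-term exact sequence of standard cohomology sheaves attached to the triangle $x_n\to x_0\to q_n\to x_n[1]$, since for $x\in{}^p\sC$ one has $\sH^0(x)\in\sF$ and $\sH^1(x)\in\sT$ by definition and the truncation triangle is precisely the torsion-pair decomposition; the torsion-versus-torsion-free vanishing arguments you use at each stage are exactly the content the paper extracts by its chases. The one point worth stating explicitly if you write this up is the compatibility of the induced maps on $\sH^0$ and $\sH^1$ under composition of the inclusions $x_m\subset x_n\subset x_{n_0}$, which is what upgrades ``the sheaves stabilize'' to ``the maps are isomorphisms'' and hence kills $q=x_n/x_{n+1}$ at the end; you gesture at this with ``by functoriality'' and it is indeed automatic, so there is no gap.
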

\begin{proposition}\label{noether} $^p\sC$ is noetherian.
\end{proposition}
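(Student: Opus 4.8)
The plan is to exploit the torsion pair $(\sF, \sT[-1])$ on ${}^p\sC$ furnished by Theorem (\ref{tstructure})(3), reduce the problem to the two classes separately, and then control ascending chains inside a torsion-free sheaf by passing to cohomology and confining everything to a fixed reflexive hull. Throughout I would use the long exact cohomology sequence attached to a short exact sequence $0 \to a \to b \to c \to 0$ in ${}^p\sC$, namely
\[
0 \to \sH^0(a) \to \sH^0(b) \to \sH^0(c) \to \sH^1(a) \to \sH^1(b) \to \sH^1(c) \to 0,
\]
all other cohomology sheaves vanishing since $a,b,c \in {}^p\sC$. In particular $\sH^0$ is left exact and $\sH^1$ right exact on ${}^p\sC$, and a monomorphism in ${}^p\sC$ induces a monomorphism on $\sH^0$. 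Since an abelian category equipped with a torsion pair is noetherian as soon as both classes consist of noetherian objects (noetherianity being stable under extension), it suffices to show that every object of $\sT[-1]$ and every object of $\sF$ is noetherian in ${}^p\sC$.

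For $\sT[-1]$ this is immediate: because $X$ is very simple, every torsion sheaf is supported on finitely many points and hence is of finite length in $\sC$, with a finite filtration by skyscrapers $\bC_x$. A short diagram chase with the above sequence shows each $\bC_x[-1]$ is simple in ${}^p\sC$, so each $t[-1]$ with $t \in \sT$ has finite length in ${}^p\sC$, and is in particular noetherian. The substantial case is $f \in \sF$. Given an ascending chain $y_1 \subseteq y_2 \subseteq \cdots \subseteq f$ in ${}^p\sC$, I would first apply $\sH^0$: the $\sH^0(y_i)$ form an ascending chain of subsheaves of the coherent sheaf $\sH^0(f)=f$, which stabilizes since $\sC$ is noetherian; say $\sH^0(y_i)=g$ for $i \ge i_0$. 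For such $i$ the quotient $q_i := f/y_i$, formed in ${}^p\sC$, satisfies $\sH^1(q_i)=0$ (because $\sH^1(f)=0$), hence $q_i \in \sF$, and the long exact sequence becomes $0 \to g \to f \to q_i \to \tau_i \to 0$ in $\sC$ with $\tau_i := \sH^1(y_i) \in \sT$. Thus $\bar f := f/g$ is torsion-free (so $g$ is saturated in $f$), each $q_i$ is a torsion-free sheaf containing the fixed sheaf $\bar f$ with torsion quotient $\tau_i$, and the inclusions $y_i \subseteq y_{i+1}$ induce sheaf inclusions $q_i \subseteq q_{i+1}$ compatible with $\bar f \subseteq q_i$.

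The hard part, and the only place where ``very simple'' is essential, is to bound the $q_i$. Here I would use that $\tau_i$ is supported on finitely many points, hence on a subset of codimension $\ge 2$, since $\dim X \ge 2$ and $X$ has no proper positive-dimensional subvariety. Consequently $\bar f \hookrightarrow q_i$ is an isomorphism in codimension $\le 1$, so the reflexive hulls coincide, $q_i^{\vee\vee} = \bar f^{\vee\vee}$, and $q_i \hookrightarrow q_i^{\vee\vee} = \bar f^{\vee\vee}$. Therefore the ascending chain $q_1 \subseteq q_2 \subseteq \cdots$ lives inside the single coherent sheaf $\bar f^{\vee\vee}$ (coherence of the bidual being valid on a complex manifold), hence stabilizes by noetherianity of $\sC$. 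Once $q_i$ is constant the map $f \to q_i$ is constant, so $y_i = \ker_{{}^p\sC}(f \to q_i)$ stabilizes, proving that $f$ is noetherian in ${}^p\sC$. The points needing care are precisely the invariance of the bidual under codimension-$\ge 2$ modifications and the coherence of $\bar f^{\vee\vee}$ in the analytic setting; everything else is the elementary cohomology chase indicated above.
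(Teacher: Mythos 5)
Your argument is correct, but it takes a genuinely different route from the paper's at the decisive step. The paper simply reruns the artinian proof with the inequalities reversed: it decomposes every term of the ascending chain $a_1 \subseteq a_2 \subseteq \cdots \subseteq a_\infty = a$ via the torsion pair as $0 \to f_n \to a_n \to t_n[-1] \to 0$, shows by the same diagram chases that $f_n \subseteq f_m$ with torsion-free quotient and $t_n \hookrightarrow t_m$ in $\sC$ for $n < m \le \infty$, and then uses the two nondecreasing bounded invariants $\mathrm{rank}(f_n) \le \mathrm{rank}(f_\infty)$ and $\ell(t_n) \le \ell(t_\infty)$ to force stabilization of $f_n$, then $t_n$, then $a_n$. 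You instead reduce to the two torsion classes via extension-stability of noetherianity, dispose of $\sT[-1]$ by finite length, and for $f \in \sF$ bound the quotients $q_i = f/y_i$ --- correctly identified as torsion-free sheaves containing the fixed $\bar f$ with punctual cokernel --- inside the single coherent sheaf $\bar f^{\vee\vee}$. Both work. The paper's version buys economy: rank and length suffice, no duals appear, and $\dim X \ge 2$ is not needed here (the paper invokes it only later, in Claim (\ref{setting})), whereas your bidual step uses codimension $\ge 2$ of point supports and the analytic facts that $(-)^{\vee\vee}$ is coherent, that torsion-free sheaves inject into their biduals, and that biduals are insensitive to punctual modification. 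Your version buys a cleaner structural statement (each torsion class is separately noetherian) and avoids redoing the artinian diagrams. One point you should make explicit: the maps $q_i \to q_{i+1}$ are a priori epimorphisms in $^p\sC$ (quotients by larger subobjects), so their injectivity as sheaf maps is not automatic; it follows because the sheaf kernel $\sH^0(y_{i+1}/y_i)$ is torsion-free, meets $\bar f$ trivially inside $q_i$, and hence embeds in the torsion sheaf $q_i/\bar f$, forcing it to vanish.
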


\begin{proof} Here we shall show Proposition (\ref{artin}). Let 
$a = a_{0} \in {\rm Ob}\,(^p\sC)$ 
and 
$$a_0 \supset a_1 \supset \ldots \supset a_n \supset \ldots$$
be a descending chain of subobjects of $a$ in $^p\sC$. Recall that 
$(\sF, \sT[-1])$ 
forms a torsion pair of $^p\sC$. Thus, for each integer $n \ge 0$, there are $f_n \in {\rm Ob}\, (\sF)$, $t_n \in {\rm Ob}\, (\sT)$ 
and an exact sequence in $^p\sC$:
$$0 \lra f_n \lra a_n \lra t_n[-1] \lra 0\, .$$ 
As $(\sF, \sT[-1])$ is a torsion pair of $^p\sC$, it follows that $\Hom_{^p\sC}(f_n, t_m[-1]) = 0$. In particular, for each 
$$n > m\, ,$$
via the monomorphism $a_m \supset a_n$, we have 
$f_m \supset f_n$ and hence, a sequence of mono-morphisms 
$$f_0 \supset f_1 \supset \ldots \supset f_n \supset \ldots$$ in $^p\sC$. Let 
$g_{n, m} = f_m/f_n$ in $^p\sC$. 
Since $g_{n, m} \in {\rm Ob}\, (^p\sC)$ and $(\sF, \sT[-1])$ 
is a torsion pair of $^p\sC$, there are $h_{n,m} \in {\rm Ob}\, (\sF)$, 
$s_{n,m} \in {\rm Ob}\, (\sT)$ and an exact sequence
$$0 \lra h_{n,m} \lra g_{n,m} \lra s_{n,m}[-1] \lra 0$$
in $^p\sC$. Thus, we obtain a commutative diagram of exact sequences
$$
\begin{CD}
@. @. @. 0       @.\\
@. @. @. @VVV @.\\ 
@. @. @. h_{n,m} @.\\
@. @. @. @VVV @.\\ 
0 @>>> f_n @>>> f_m @>>{\tau}> g_{n,m} @>>> 0\\
@. @. @. @VV{\pi}V @.\\  
@. @. @. s_{n,m}[-1] @.\\
@. @. @. @VVV @.\\ 
@. @. @. 0 @.\\
\end{CD}
$$
Since $(\sF, \sT[-1])$ is a torsion pair of $^p\sC$, it follows that $\Hom(f_m, s_{n, m}[-1]) = 0$. Thus $\pi \circ \tau = 0$. On the other hand, since $\tau$ and $\pi$ are epi-morphisms in $^p\sC$, so is $\pi \circ \tau$ in $^p\sC$. 
Thus $s_{n,m}[-1] \simeq 0$. Hence $g_{n, m} \simeq h_{n,m}$ in $^p\sC$, 
whence we have an exact sequence 
$$0 \lra f_n \lra f_m \lra h_{n,m} \lra 0$$
in $^p\sC$, whence a distinguished triangle
$$f_n \lra f_m \lra h_{n,m} \lra f_{n}[1]$$ 
in $\sD$. Here $f_n$, $f_{m}$, $h_{n,m}$ are in ${\rm Ob}\, (\sF)\, 
\subset\, {\rm Ob}\, (\sC)$. Thus, we have an exact sequence
$$0 \lra f_n \lra f_m \lra h_{n,m} \lra 0$$
also in $\sC = {\rm Coh}\,X$. Thus $h_{n,m} = f_m/f_n$ 
also in $\sC$. For an object $\sC = {\rm Coh}\, X$, we can speak 
of its rank and we obtain the key inequality (I):
$$\infty > {\rm rank}\,(f_0) \ge {\rm rank}\,(f_1) \ge \ldots \ge {\rm rank}\,(f_n) \ge \ldots\, .$$
Thus ${\rm rank}\,(f_n)$ is constant for all large $n$, say for all $n \ge N$. 
Since $h_{n,m} = f_m/f_n$ is tosion free, 
it follows that $f_n$ are isomorphic for all $n \ge N$ under the same morphisms in the exact sequence any one of above. Thus $f_n$ are all the same for 
all $n \ge N$ as subobjects of $a_0$ in $^p\sC$. By 
$t_n[-1] \simeq a_n/f_n = a_n/f_N$ in $^p\sC$, it follows that the natural inclusion $a_n \subset a_m$ induces a descending chain of inclusions 
$$t_N[-1] \supset t_{N+1}[-1] \supset \ldots \supset t_{m}[-1] \supset \ldots \,\, .$$
Let $m \ge N$ and set $q_{n,m} = t_{m}[-1]/t_{n}[-1]$ in $^p\sC$. 
Then, as before, we obtain a commutative diagram of exact sequences in $^p\sC$:
$$
\begin{CD}
@. @. @. 0       @.\\
@. @. @. @VVV @.\\ 
@. @. @. \ell_{n,m} @.\\
@. @. @. @VV{\beta_{n,m}}V @.\\ 
0 @>>> t_n[-1] @>>{\iota_{n,m}}> t_m[-1] @>>{\alpha_{n,m}}> q_{n,m} @>>> 0\\
@. @. @. @VV{\gamma_{n,m}}V @.\\  
@. @. @. r_{n,m}[-1] @.\\
@. @. @. @VVV @.\\ 
@. @. @. 0 @.\\
\end{CD}
$$
where $\ell_{n,m} \in {\rm Ob}\, (\sF)$ and $r_{n,m} \in {\rm Ob}\, (\sT)$. 
Put $\delta_{n,m} := \gamma_{n,m} \circ \alpha_{n,m}$. As before, 
$\delta_{n,m}$ is an epi-morphism in $^p\sC$. 
Then, by setting $K_{n, m} = {\rm Ker}\, (\delta_{n,m})$, 
we have an exact sequence in $^p\sC$:
$$0 \lra K_{n,m} \xrightarrow{\kappa_{n,m}} t_m[-1] 
\xrightarrow{\delta_{n,m}} r_{n,m}[-1]  \lra 0\, .$$
Since $(\sF, \sT[-1])$ is a torsion pair of $^p\sC$, there are $x \in {\rm Ob}\,(\sF)$, $k_{n,m} \in {\rm Ob}\, (\sT)$ and an exact sequence
$$0 \lra x  \xrightarrow{o} K_{n,m} \lra k_{n,m}[-1] \lra 0\, .$$
Since $o$ and $\kappa_{n,m}$ are mono-morphisms in $^p\sC$, so is 
$\kappa \circ o$. On the other hand, since $(\sF, \sT[-1])$ is a torsion 
pair of $^p\sC$, $x \in {\rm Ob}\, (\sF)$ and $k_{n,m}[-1] \in {\rm Ob}\, (\sT[-1])$, it follows that the mono-morphism $\kappa \circ o$ is at the same time 
$0$. Thus, $x = 0$ in $^p\sC$, 
whence 
$$K_{n,m} = k_{n,m}[-1] \in {\rm Ob}\, (\sT[-1])\, .$$
By 
$$\gamma_{n,m} \circ \alpha_{n,m} \circ \kappa_{n,m} = \delta_{n,m} \circ \kappa_{,m} = 0\, ,$$
we have a morphism $p_{n,m} : k_{n,m}[-1] \lra \ell_{n,m}$ and we have 
the following commutative diagram in $^p\sC$ with exact lows:
$$
\begin{CD}
0 @>>> k_{n,m}[-1] @>>{\kappa_{n,m}}> t_m[-1] @>>{\delta_{n,m}}> 
r_{n,m}[-1] @>>> 0\\
@. @V{p_{n,m}}VV @V{\alpha_{n,m}}VV @V{id}VV @.\\ 
0 @>>> \ell_{n,m} @>{\beta_{n,m}}>> q_{n,m} @>>{\gamma_{n,m}}> r_{n,m}[-1] 
@>>> 0\\
\end{CD}
$$ 
Then, by the snake lemma, we have then exact sequences in $^p\sC$:
$$0 \lra {\rm Ker}\,(p_{n,m}) \lra {\rm Ker}\,(\alpha_{n,m}) = t_n[-1] \lra 
{\rm Ker}\,(id) = 0$$
$${\rm Ker}\,(id) = 0 \lra {\rm Coker}\,(p_{n,m}) \lra 
{\rm Coker}\,(\alpha_{n,m}) = 0\, .$$
Thus, ${\rm Ker}\,(p_{n,m}) = t_n[-1]$ and ${\rm Coker}\,(p_{n,m}) = 0$. 
Hence, we obtain an exact sequence in $^p\sC$:
$$0 \lra t_n[-1] \xrightarrow{\iota_{n,m}'} k_{n, m}[-1] 
\xrightarrow{p_{n,m}} \ell_{n,m} \lra 0\, .$$
Then, the sequence 
$$t_n[-1] \xrightarrow{\iota_{n,m}'} k_{n, m}[-1] 
\xrightarrow{p_{n,m}} \ell_{n,m} \lra t_n = (t_n[-1])[1]\,$$
is a distinguished triangle in $\sD$. From this, we also obtain distinguished 
traiangles
$$k_{n,m}[-1] \lra \ell_{n,m} \lra t_n \lra k_{n,m}\, ,$$
$$\ell_{n,m} \lra t_n \lra k_{n,m} \lra \ell_{n,m}[1]$$ 
in $\sD$. Since $\ell_{n,m}$, $t_n$ 
and $k_{n,m}$ are in $\sC$, we obtain an exact sequence in $\sC$:
$$0 \lra \ell_{n,m} \lra t_n \lra k_{n,m} \lra 0\, .$$
Since $\ell_{n,m} \in \sF$ but $t_n \in \sT$, it follows that 
$\ell_{n,m} \simeq 0$ in $\sC$, and therefore also in $\sF$. Thus $\ell_{n,m} \simeq 0$ 
in $^p\sC$ as well. Hence $q_{n,m} \simeq r_{n,m}[-1]$ in $^p\sC$, and we 
have an exact sequence in $^p\sC$:
$$0 \lra t_n[-1] \xrightarrow{\iota_{n,m}} t_m[-1] 
\xrightarrow{\delta_{n,m}} r_{n,m}[-1] \lra 0\, .$$
We then obtain disinguished triangles in $\sD$:
$$t_n[-1] \xrightarrow{\iota_{n,m}} t_m[-1] 
\xrightarrow{\delta_{n,m}} r_{n,m}[-1] \lra t_n\, ,$$
$$t_n \xrightarrow{\iota_{n,m}[1]} t_m 
\xrightarrow{\delta_{n,m}[1]} r_{n,m} \lra t_n[1]\, ,$$
and therefore an exact sequence in $\sC$:
$$0 \lra t_n \xrightarrow{\iota_{n,m}[1]} t_m \lra r_{n,m} \lra 0\, .$$
Recall that our manifold $X$ is very simple. Then, for any torsion sheaf $t$, 
the support of $t$ consists of finitely many points and 
one can speak of the (finite) length $\ell(t_n)$. 
One has then the key inequality (II):
$$\ell(t_N) \ge \ell(t_{N+1}) \ge \ldots \ge \ell(t_m) \ge \ldots \, .$$
Thus, $\ell(t_m)$ are stationally for all large $m$, say for all $m \ge M$ with $M \ge N$. 
Hence, under the mono-morphism $\iota_{n,m}[1]$, we have $t_m = t_n$, for all 
$n > m \ge M$, in $\sC$, whence also in $\sD$. Thus, under the morphism 
$\iota_{n,m}$, 
we have that $t_m[-1] = t_n[-1]$ for all $n > m \ge M$ in $\sD$, 
whence, in $\sT[-1]$ as well. Thus, $a_n = a_m$, for all $n > m \ge M$, in 
$\sD$. This means that $^p\sC$ is artinian. 
\end{proof}

\begin{proof} Proposition (\ref{noether}) is proved by 
\cite{BV03} Section 5, 5.6. Step 4 based on Lemma 5.5.2 in \cite{BV03}. Proposition (\ref{noether}) also follows from a few minor modifications of the proof of Proposition (\ref{artin}) as follows. 

Let $a = a_{\infty} \in\, {\rm Ob}\, (^p\sC)$ and 
$$a_1 \subset a_2 \subset \ldots \subset a_n \subset \ldots \subset 
a_{\infty}$$
be an ascending chain of subobjects of $a_{\infty}$ in $^p\sC$. As in the proof of Proposition 
(\ref{artin}), for each $1 \le n \le \infty$, there are $f_n \in {\rm Ob}\, (\sF)$, $t_n \in {\rm Ob}\, (\sT)$ 
and an exact sequence
$$0 \lra f_n \lra a_n \lra t_n[-1] \lra 0$$
in $^p\sC$. Then for $1 \le n, m \le \infty$ {\it with inequality reversed}
$$n < m\, $$
we obtain exactly the same commutative diagrams, exact sequences and distinguished triangles as in the proof of Proposition (\ref{artin}). 
Thus, corresponding to the key inequalities (I), (II) in the proof of Proposition (\ref{artin}), we have
$${\rm rank}\, (f_1) \le\, {\rm rank}\,(f_2) \le \ldots \le\, 
{\rm rank}\,(f_n) 
\le \ldots \le {\rm rank}\,(f_{\infty}) < \infty\, ,$$
$$\ell(t_{N}) \le \ell(t_{N+1}) \le \ldots \le \ell(t_{m}) \le \ldots \le 
\ell(t_{\infty}) < \infty\, .$$
From these two inequalities together with the same commutative diagrams, exact sequences and distinguished triangles in the proof of Proposition (\ref{artin}), we can find a positive integer $M$ such that $a_m = a_M$ for all $m \ge M$ 
exactly as the same way as in the proof of Proposition (\ref{artin}). 
\end{proof}

This completes the proof of Theorem (\ref{artinnoether}). 
\end{proof}

By Theorem (\ref{artinnoether}), any object of $^p\sC$ is given by a finite successive extensions of simple objects of $^p\sC$. Let $\{c_{\lambda}\}_{\lambda \in \Lambda}$ be a set of complete representatives of isomorphism classes of simple objects of $^p\sC$. 

Let $^p\sC^{\rm op}$ be the opposite category of $^p\sC$. Then, since $^p\sC$ is noetherian as observed above, it follows that $^p\sC^{\rm op}$ is artinian. 
Let $\hat{^p\sC}$ be the full subcategory of the category of functors 
${\rm Funct}(^p\sC^{\rm op}, ({\rm Vect-}{\bC}))$ consisting of left exact functors. ($\hat{^p\sC}$ is denoted by ${\rm Sex}\, (^p\sC^{\rm op}, ({\rm Vect-}{\bC}))$ in \cite{Ga62}, but we will not use this notation.) Then, we have a natural {\it covariant} functor
$$\iota : (^p\sC) \lra \hat{^p\sC}\, ;\, x \mapsto \Hom_{^p\sC}(-, x)\, .$$
Here $^p\sC = (^p\sC)$. By \cite{Ga62}, Page 354, Proposition 6, $\iota$ is an exact functor and by \cite{Ga62}, Page 356, Th\'eor\`eme 1, $\hat{^p\sC}$ is locally noetherian 
(see \cite{Ga62}, Page 356, for definition) and $\iota$ is fully faithful. 
Moreover, by \cite{Ga62}, Page 362, Th\'eor\`eme 2, it follows that 
one can find an injective hull (called enveloppe injective in \cite{Ga62}) of each object of $\hat{^p\sC}$ {\it inside} $\hat{^p\sC}$. 
For each $c_{\lambda}$, let us choose and fix an injective hull 
$\hat{c_{\lambda}} \in {\rm Ob}\, (\hat{^p\sC})$ of $\iota(c_{\lambda})$ 
and set 
$$e = \oplus_{\lambda \in \Lambda} \hat{c_{\lambda}}\,\,.$$
This $e$ is an object of $\hat{^p\sC}$, because $\hat{^p\sC}$ is locally 
noetherian, and is in fact an injective object by \cite{Ga62}, Page 358, 
Corollaire 1. Since $\iota$ is an exact functor, $\iota$ gives rise to a functor 
between the bounded derived categories:
$$\tilde{\iota} : D^b(^p\sC) \lra D^b(\hat{^p\sC})\, .$$

Now the following theorem completes the proof of our main theorem:

\begin{theorem}\label{unrepresented} 
Let $E(-) = \Hom_{D^b(\hat{^p\sC})}(\tilde{\iota}(-), e)$. 
\begin{enumerate}
\item The functor $E(-)$ is a cohomological functor of finite type on $\sD = D^b(\sC) \simeq D^b( ^p\sC)$.
\item The functor $E(-)$ is not representable 
on $\sD$.
\end{enumerate}
\end{theorem}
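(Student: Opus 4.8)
The plan is to treat the two assertions separately: the first is essentially formal, while the second carries the real content and is where the hypotheses (finite length of $^p\sC$ and the infinitude of simple objects) get used.

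For assertion (1), I would argue as follows. Since $\iota$ is exact, $\tilde{\iota}$ is a triangulated functor, and $\Hom_{D^b(\hat{^p\sC})}(-,e)$ is a contravariant cohomological functor; their composite $E$ is therefore cohomological, which settles the homological half of (1). For the finite type property the key device is that $e$ is \emph{injective} in $\hat{^p\sC}$, so for any $Z \in {\rm Ob}\,(D^b(\hat{^p\sC}))$ one has $\Hom_{D^b(\hat{^p\sC})}(Z,e) \cong \Hom_{\hat{^p\sC}}(\sH^0(Z),e)$ with no need to resolve $e$. Applying this to $Z = \tilde{\iota}(b[n])$ and using that $\iota$, being exact, is $t$-exact (so $\sH^0(\tilde{\iota}(b[n])) = \iota(^{p}\sH^{n}(b))$, where $^{p}\sH^{i}$ denotes cohomology in the heart $^p\sC$), I obtain the clean formula
$$E(b[n]) \cong \Hom_{\hat{^p\sC}}(\iota(^{p}\sH^{n}(b)),\, e)\, .$$
Finiteness then follows from two inputs: $b$ has only finitely many nonzero $^{p}\sH^{n}(b)$, and for each finite-length $x \in {\rm Ob}\,(^p\sC)$ one has $\dim_\bC \Hom_{\hat{^p\sC}}(\iota(x),e) = \ell(x) < \infty$. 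I would establish the latter from $e = \oplus_\lambda \hat{c_\lambda}$ together with the facts that $\hat{c_\lambda}$ is the injective hull of the simple object $\iota(c_\lambda)$, hence has socle $\iota(c_\lambda)$, and that $\mathrm{End}(c_\lambda) = \bC$ by algebraic closedness; this gives $\dim_\bC \Hom(\iota(S),\hat{c_\lambda}) = \delta_{S,c_\lambda}$ on simple objects $S$, and induction on length via exactness of $\Hom(-,\hat{c_\lambda})$ yields $\dim_\bC \Hom(\iota(x),\hat{c_\lambda}) = [x:c_\lambda]$ and hence $\dim_\bC\Hom(\iota(x),e)=\ell(x)$.

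For assertion (2), I would argue by contradiction: suppose $E \cong \Hom_{\sD}(-,a)$ for some $a \in {\rm Ob}\,(\sD)$. The formula above, applied to $x \in {\rm Ob}\,(^p\sC)$ placed in various degrees, gives the decisive constraint: for every such $x$ and every $m \neq 0$,
$$\Hom_{\sD}(x,a[m]) = E(x[-m]) = \Hom_{\hat{^p\sC}}(\iota(^{p}\sH^{0}(x[-m])),\, e) = 0\, ,$$
because $^{p}\sH^{0}(x[-m]) = {}^{p}\sH^{-m}(x) = 0$ for $m \neq 0$. Thus the graded space $\bigoplus_m \Hom_{\sD}(x,a[m])$ is concentrated in degree $0$ for every object $x$ of the heart. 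From the vanishing in negative degrees I would deduce $a \in {}^p\sD^{\ge 0}$: if $^{p}\sH^{j}(a) \neq 0$ for a minimal $j < 0$, the canonical nonzero truncation morphism $^{p}\sH^{j}(a)[-j] = {}^p\tau_{\le j}a \to a$ produces a nonzero element of $\Hom_{\sD}(^{p}\sH^{j}(a),a[j])$ with $j < 0$, contradicting the constraint. Granting $a \in {}^p\sD^{\ge 0}$, the truncation triangle $a_0 \to a \to {}^p\tau_{\ge 1}a \to a_0[1]$ with $a_0 := {}^{p}\sH^{0}(a)$, combined with the $t$-structure orthogonality $\Hom({}^p\sD^{\le 0},{}^p\sD^{\ge 1}) = 0$, yields a natural isomorphism $\Hom_{\sD}(x,a) \cong \Hom_{^p\sC}(x,a_0)$ for all $x \in {\rm Ob}\,(^p\sC)$.

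Now comes the punch line. Evaluating at the simple objects, for each $\lambda$ I get $\bC = E(c_\lambda) = \Hom_{\sD}(c_\lambda,a) \cong \Hom_{^p\sC}(c_\lambda,a_0)$, so the simple object $c_\lambda$ admits a nonzero, hence monic, morphism into $a_0$, i.e. $c_\lambda$ is a subobject of $a_0$. But $a_0 \in {\rm Ob}\,(^p\sC)$ is of finite length by Theorem (\ref{artinnoether}), so only finitely many isomorphism classes of simple objects can embed into it; this contradicts the fact that the skyscraper-type simple objects $\bC_x[-1]$, ranging over the infinitely many points $x \in X$, are pairwise non-isomorphic simple objects of $^p\sC$. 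This contradiction shows $E$ is not representable, completing (2). The steps I expect to be most delicate are the injective-resolution-free reduction of assertion (1) to $\Hom_{\hat{^p\sC}}(\iota(^{p}\sH^{n}(b)),e)$ together with the length computation $\dim_\bC\Hom(\iota(x),e)=\ell(x)$, and, in (2), the verification that $a \in {}^p\sD^{\ge 0}$ and the resulting identification $\Hom_{\sD}(x,a)\cong\Hom_{^p\sC}(x,a_0)$ — this is the only place the tilted $t$-structure is used in an essential way. Once these are in hand, the final simple-counting contradiction is immediate.
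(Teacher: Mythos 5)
Your proof is correct, and while part (1) follows the paper's own route almost verbatim (injectivity of $e$ reduces everything to $\Hom_{\hat{^p\sC}}(\iota(-),e)$ on the heart, and the count $\dim_{\bC}\Hom_{\hat{^p\sC}}(\iota(x),e)=\ell(x)$ is the paper's $n(c)$), your part (2) ends with a genuinely different contradiction. Both arguments begin the same way: the vanishing $E(x[m])=0$ for $x\in{\rm Ob}\,(^p\sC)$, $m\neq 0$, forces the hypothetical representing object into the heart (you are in fact a bit more careful here than the paper's Claim, since you only establish $a\in{}^p\sD^{\ge 0}$ and then invoke $t$-structure orthogonality to get $\Hom_{\sD}(x,a)\simeq\Hom_{^p\sC}(x,a_0)$, rather than asserting outright that all heart cohomologies of $a$ vanish away from degree $0$). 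After that the paper chooses a single point $x$ outside the support of the torsion part of $p$ and the non-locally-free locus of its free part, and computes $\Hom_{\sD}(\bC_x[-1],p)=0$ by the local vanishing ${\mathcal Ext}^1(\bC_x,f)=0$ over the regular local ring $\sO_{X,x}$ of dimension $\ge 2$; this contradicts $E(\bC_x[-1])=\bC$. You instead note that $\Hom_{^p\sC}(c_\lambda,a_0)=E(c_\lambda)=\bC$ for \emph{every} simple $c_\lambda$, so every simple object embeds monically into $a_0$, and since pairwise non-isomorphic simple subobjects generate a direct sum inside $a_0$, the infinitely many non-isomorphic simples $\bC_x[-1]$ contradict the finite length of $a_0$ guaranteed by Theorem (\ref{artinnoether}). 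Your version is softer and purely categorical, and it is closer in spirit to the Reiten--Van den Bergh style argument the paper alludes to; what you give up is that the hypothesis $\dim_{\bC}X\ge 2$ becomes invisible in the proof of this theorem and is used only through Theorem (\ref{artinnoether}) --- indeed, on a curve one has $\Ext^1_{\sC}(\bC_x,\sO_X)\neq 0$, so $\bC_x[-1]$ embeds into $\sO_X$ in the tilted heart for every $x$ and the heart is not of finite length, which is exactly where your argument (correctly) would fail. The paper's computation has the advantage of localizing the dimension hypothesis explicitly and of exhibiting the precise object on which $E$ and $\Hom_{\sD}(-,p)$ disagree, but both proofs are valid.
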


\begin{proof} It is clear that $E(-)$ is a cohomological functor. Let 
us show that $E(-)$ is of finite type. Let $c \in {\rm Ob}\, (\sD) = {\rm Ob}\, (D^b(^p\sC))$. 
Since $e$ is an injective object in $\hat{^p\sC}$, it follows that 
$$E(c) = \Hom_{D^b(\hat{^p\sC})}(\tilde{\iota}(c), e) = 
\Hom_{K^b(\hat{^p\sC})}(\tilde{\iota}(c), e)\, .$$ 
Here $K^b(\hat{^p\sC})$ is the homotopy category of bounded complexes of 
$\hat{^p\sC}$. Since $e$ is only in degree $0$ in $K^b(\hat{^p\sC})$ and $\tilde{\iota}(c)$ is represented by a bounded complex of objects of $^p\sC$, 
it is then sufficient to show that 
$$\dim_{\bC}\, \Hom_{\hat{^p\sC}}(\iota(c), e) < \infty$$
for all $c \in {\rm Ob}\,(^p\sC)$. By the definition of $e$, 
we have 
$$\Hom_{\hat{^p\sC}}(\iota(c_{\lambda}), e) = \bC\, ,\, \Ext_{\hat{^p\sC}}^1(
\iota(c_{\lambda}), e) = 0$$
for each $c_{\lambda}$. Since $c \in {\rm Ob}\, (^p\sC)$ is obtained 
as extenstions of objects in $\{c_{\lambda}\}_{\lambda \in \Lambda}$ 
in finitely many times, say $n(s)$, and $\iota$ is an exact functor, 
it follows that 
$$\dim_{\bC}\, \Hom_{\hat{^p\sC}}(\iota(c), e) = n(c) < \infty\, .$$
This implies the assertion (1). 

Let us show the assertion (2). Assuming to the contrary that there is 
$p \in {\rm Ob}\, (\sD)$ such that $E(-) \simeq \Hom_{\sD}(-, p)$ as functors, 
we shall derive a contradiction. 

\begin{claim}\label{abelian}
We can take $p$ in ${\rm Ob}\, (^p\sC)$. 
\end{claim}

\begin{proof}
Since $e$ is an injective object of $\hat{^p\sC}$, we have $E(c[n]) = 0$ 
for any $c \in {\rm Ob}\, (^p\sC)$ and for any $n \in \bZ \setminus \{0\}$. 
Since $p$ is represented by a bounded complex of objects in 
${\rm Ob}\,(^p\sC)$, it follows that the cohomology objects $H_{^p\sC}^n(p)$ 
are $0$ for all $n \in \bZ \setminus \{0\}$. Thus $p \simeq p'$ for some 
$p' \in {\rm Ob}\,(^p\sC)$. We may replace $p$ by $p'$.  
\end{proof} 

By Claim (\ref{abelian}), there are $f \in {\rm Ob}\, (\sF)$, $t \in {\rm Ob}\, (\sT)$ and an exact sequence in $^p\sC$:
$$0 \lra f \lra p \lra t[-1] \lra 0\, .$$
As $X$ is very simple, the union $S$ of the support of $t$ and the locus where 
$f$ is not locally free consists of finitely many points, say, 
$S = \{x_1, x_2, \ldots , x_m\}$. 
Let $x \in X \setminus S$. 
Since $\dim\, X \ge 1$, such a point $x$ certainly exists. Since 
$\bC_x[-1]$ is a simple object of $^p\sC$, by definition of $e$, we have 
$E(\bC_x[-1]) \simeq \bC$. Thus, the next claim would give a 
contradiction {\it as desired}.

\begin{claim}\label{setting}
$E(\bC_x[-1]) = 0$.  
\end{claim} 

\begin{proof} From the exact sequence 
$0 \lra f \lra p \lra t[-1] \lra 0$ 
in $^p\sC$, we obtain an exact sequence
$$\Hom_{^p\sC}(\bC_x[-1], f) \lra \Hom_{^p\sC}(\bC_x[-1], p) \lra \Hom_{^p\sC}(\bC_x[-1], t[-1])\,.$$
This is the same as the exact sequence $(*)$:
$$\Hom_{\sD}(\bC_x[-1], f) \lra \Hom_{\sD}(\bC_x[-1], p) \lra \Hom_{\sD}(\bC_x[-1], t[-1])\,.$$
Since $t, \bC_x \in {\rm Ob}\, (\sT) \subset {\rm Ob}\, (\sD)$ 
and $f \in {\rm Ob}\, (\sF) \subset {\rm Ob}\, (\sD)$, it follows that
$$\Hom_{\sD}(\bC_x[-1], t[-1]) \simeq \Hom_{\sD}(\bC_x, t) \simeq \Hom_{\sC}(\bC_x, t) = 0\,.$$
Here the last equality follows from $x \not\in\, {\rm Supp}\,(t)$. 
We also have 
$$\Hom_{\sD}(\bC_x[-1], f) \simeq \Hom_{\sD}(\bC_x, f[1]) \simeq 
\Ext_{\sC}^1(\bC_x, f)\, .$$ 
We compute the last term. The computaion is now in the {\it usual} 
category ${\rm Coh}\, X$. For a sheaf $s$ and a point $x$ on $X$, as usual, 
we denote by $s_x$ the germ of $s$ at $x$. As $f$ is torsion free, we have 
${\mathcal Hom}(\bC_x, f) = 0$. Note 
that $(\bC_x)_y = 0$ unless $y = x$. Recall also that $\mathcal O_{X, x}$ is a noetherian regular local ring of dimension $\ge 2$ and $f$ is locally free 
at $x$ by $x \not\in S$.  Thus,  
$$({\mathcal Ext}^1(\bC_x, f))_y \simeq {\mathcal Ext}^1((\bC_x)_y, f_y)  = 0$$ for all $y \in X$ including the case $y=x$. Thus, ${\mathcal Ext}^1(\bC_x, f) = 0$. This is only the place where we used the fact that $\dim_{\bC} X \ge 2$. 
Now, by the local-global spectral sequence, we obtain 
$\Ext_{\sC}^1(\bC_x, f) = 0$. 
Hence, by the exact sequence $(*)$, $E(\bC_x[-1]) = \Hom_{\sD}(\bC_x[-1], p) = 0$. 
\end{proof}
This contradiction completes the proof of (2). 
\end{proof} 
\vskip 1cm

\vskip .2cm \noindent Keiji Oguiso \\ 
Department of Mathematics\\
Osaka University\\ 
Toyonaka 560-0043 Osaka, Japan\\
oguiso@math.sci.osaka-u.ac.jp

\end{document}